\newtheoremstyle{mytheorem}{.5em}{.5em}%
     {\it}
     {}
     {}
     {}
     {.5em}
     {#2 \thmname{ \bf{#1}.} \thmnote{\it{#3}.}}
\theoremstyle{mytheorem}
\newtheorem{theorem}{Theorem}[section]
\newtheorem{lemma}[theorem]{Lemma}
\newtheorem{corollary}[theorem]{Corollary}
\newtheoremstyle{note}{.5em}{.5em}%
     {}
     {}
     {}
     {}
     {.5em}
     {#2 \thmname{ \bf{#1}.} \thmnote{\it{#3}.}}
\theoremstyle{note}
\newtheorem{example}[theorem]{Example}
\newtheorem{remark}[theorem]{Remark}
\newtheorem{definition}[theorem]{Definition}
\newcommand{\Z}{\mathbb{Z}}       
\newcommand{\qq}{\mathbb{Q}}      
\newcommand{\bbR}{\mathbb R}     
\newcommand{\LL}{\mathbb{L}}       
\newcommand{\CH}{\operatorname{CH}}
\newcommand{\calI}{\mathcal{I}}
\newcommand{\La}{\Lambda}           
\newcommand{\la}{\lambda}            
\newcommand{\om}{\omega}          
\newcommand{\eps}{\epsilon}            
\newcommand{\veps}{\varepsilon}     
\newcommand{\Th}{\Theta}
\newcommand{\al}{\alpha}
\newcommand{\iso}{\stackrel{\simeq}}      
\newcommand{\too}{\longrightarrow}       
\newcommand{\IA}{\mathcal{I}_a}       
\newcommand{\IM}{\mathcal{I}_m}     
\newcommand{\IF}{\mathcal{I}_F}       
\newcommand{\IFp}{\mathcal{I}_{F'}}   
\newcommand{\fgr}{R[[\La]]_F}            
\newcommand{\hgr}{\mathcal{H}(\La)_F}  
\newcommand{\hh}{\mathtt{h}}           
\newcommand{\ch}{\mathrm{c}^\hh}   
\newcommand{\cc}{\mathfrak{c}}        
\newcommand{\BB}{\mathfrak{B}}       
\newcommand{\oct}{{o.c.t. }}           
\newcommand{\fgl}{{f.g.l. }}            
\title{Invariants, exponents and formal group laws}
\date{\today}
\author{J.~Malag\'on-L\'opez, K.~Zainoulline, C.~Zhong}
\begin{document}

\maketitle

\begin{abstract}
Let $W$ be the Weyl group of a crystallographic root system acting on
the associated weight lattice by means of reflections. 
In the present notes we extend the notion of exponent of the
$W$-action to the context of an arbitrary algebraic oriented
cohomology theory of Levine-Morel and Panin-Smirnov and the associated
formal group law. 
From this point of view the classical Dynkin index of the associated Lie algebra will be the second exponent of the deformation map from the multiplicative to the additive formal group law. We apply this generalized exponent to study the torsion part of an arbitrary oriented cohomology theory of a twisted flag variety.
\end{abstract}


\section{Introduction}

Let $W$ be the Weyl group of a crystallographic root system which acts
by means of simple reflections on the respective weight lattice $\La$.
Consider the induced actions of $W$ on the polynomial ring $S^*(\La)$
and the integral group ring $\Z[\La]$ of $\La$.
The ring of invariants $\Z[\La]^W$ can be identified with the
representation ring of the associated linear algebraic group $G$ and
according to the celebrated Chevalley theorem is a polynomial ring in
classes of {\em fundamental representations}.
On the other hand  $S^*(\La)^W\otimes\qq$ is known to be a
polynomial ring as well with generators given
by {\em basic polynomial invariants}.

The main goal of paper
\cite{ba&ne&za-ch} was to establish
the relationship between these two sets of invariants by means of the
{\em Chern class map}. This was done by introducing the notion of an
{\em exponent} -- an integer $\tau_d$ which measures the difference
between these invariants.
In particular, it was proven that $\tau_2$ coincides with the {\em Dynkin
  index} of the associated Lie algebra
(see Theorem~4.4 loc.cit.). Using the {\em Riemann-Roch}
theorem and the Grothendieck {\em $\gamma$-filtration} it was also shown
\cite[Corollary~6.8]{ba&ne&za-ch} that the exponent $\tau_d$
 bounds the annihilator of the torsion part of the Chow groups
$\mathrm{CH}^d$ for $d=2,3,4$ of some twisted flag
varieties, hence, providing new estimates for the torsion in small
codimensions.

The next step was done in \cite{ba&za&zh} where it was shown that
$\tau_d$ divides the Dynkin index $\tau_2$ for all $d\ge 2$. This fact was together with Demazure's description of the
kernel of the
characteristic map allowed to obtain a uniform bound for the
annihilator of the torsion of $\mathrm{CH}^d$ of strongly inner forms
of flag varieties for all $d$, hence, pushing geometric applications of the
exponent even further.

The goal of the present paper is to extend the notion of an exponent
to the context of arbitrary algebraic {\em oriented cohomology
  theories} (o.c.t.) and the associated {\em formal group laws}
(f.g.l.). From this
point of view, the Dynkin index of a root system will be
just the second exponent of the deformation from
the multiplicative to the additive \fgl We also expect (see the last section)
that these generalized exponents can be used to estimate the torsion part of an
arbitrary algebraic \oct

Recall that the notion of an algebraic o.c.t. was
introduced
by  Levine-Morel \cite{lev&mor-book} and by Panin-Smirnov \cite{pa&sm}.
Roughly speaking, it is a cohomological-type functor $\hh$ from the
category of smooth varieties over a field to the the category of
commutative rings endowed with push-forward maps and characteristic
classes (see \S\ref{sec:orcoh}).
Basic examples of such functors are the Chow ring of algebraic cycles
modulo the rational
equivalence relation, the Grothendieck $K_0$, the algebraic cobordism
$\Omega$ of Levine-Morel (see \cite[\S2.1, 2.5, 3.8]{pa&sm} for more
examples).
The theory of formal group laws  originated from the theory of Lie
groups. Roughly speaking, a \fgl  is a formal power series
which behaves as if it were the product of a Lie group.
Its subclass of one-dimensional commutative formal group laws  had been intensively
used in topology, especially in cobordism theory and, more generally,
for studying topological oriented theories.
A link between oriented cohomology and formal group laws  is given
by the formula expressing the first characteristic class $\ch_1$ of a
tensor product of two line bundles $\ch_1(L_1\otimes
L_2)=F(\ch_1(L_1),\ch_1(L_2))$, where $F$ is the one-dimensional
commutative \fgl  over the coefficient ring $R$ associated
to $\hh$.

The key notion of the paper can be described as follows:
According to \cite{ca&za&pe-tor} the algebras $S^*(\La)$ and $\Z[\La]$
can be viewed as
specializations (for additive and multiplicative $F$ resp.) of
a more general object -- the formal
group algebra  $R[[\La]]_F$.
Consider its subring of invariants $R[[\La]]_F^W$ and the associated
ideal $\IF^W$.
Given two formal group laws  $F$
and $F'$ there is a (non canonical and non $W$-equivariant) isomorphism
$\Phi\colon R[[\La]]_F \iso\too R[[\La]]_{F'}$ called the
{\em deformation map}.
The  {\em exponent} $\tau_d^{F\to F'}$ from $F$ to $F'$ is defined to
be the exponent of the image $\Phi(\IF^W)$ in $\IFp^W$ after passing
to the $d$-th subsequent quotients of the $\IF$- and $\IFp$-adic
filtrations (see Def.~\ref{defFexp}). In case $F$ is the
multiplicative and $F'$ is the additive \fgl  we obtain the
exponent defined in \cite{ba&ne&za-ch}.

The paper is organized as follows:
In Section~\ref{sec:inv} we recall notation and results of
\cite{ba&ne&za-ch}. In Section~\ref{sec:form-group} we recall
the definition of a formal group algebra(ring) following
\cite[\S2]{ca&za&pe-tor}.
In Section~\ref{sec:deformation-maps} we define and study deformation
maps between formal group algebras (see~\ref{rem-phi-add}).
In Section~\ref{sec:fexp} we introduce the notion of an exponent
between formal group laws  (see~\ref{defFexp}) and we
prove its existence (Theorem~\ref{thmexs}); we also explain how to
compute generators of the ring of invariants rationally.
In Section~\ref{sec:orcoh} we recall several facts concerning
oriented cohomology and characteristic classes; we provide examples of computations
involving characteristic classes (see~\ref{lem-comp-cc} and
\ref{prop-pro}).
In Section ~\ref{geomthm} we establish a link between
characteristic classes and the deformation maps
(Theorem~\ref{geomthm}), hence, providing a geometric interpretation
for the latter.
In the last section we compute exponents $\tau_d^{F\to F'}$ for some
root systems (Theorem~\ref{ABD}) and provide application to the
problem of estimating the
torsion part of an \oct (Corollaries~\ref{torsapp} and \ref{torsch})

\section{Invariants and exponents}\label{sec:inv}

In the present section we recall basic definitions and results of
\cite{ba&ne&za-ch}.

\

Let $\La$ be a free abelian group of finite rank.
Consider the integral group ring $\Z[\La]$ of $\La$.
Its elements are finite linear combinations
$\sum_j a_j e^{\la_j}$, $a_j\in \Z$, $\la_j\in \La$.
Consider the augmentation map
$\eps_m\colon \Z[\La]\to \Z$ given by $e^\la \mapsto 1$.
Its kernel $I_m$ is an ideal in $\Z[\La]$ generated by the
differences $(1-e^{-\la})$, $\la\in \La$.
Consider the $I_m$-adic filtration on $\Z[\La]$
\[
  \Z[\La] = I_m^0\supseteq I_m\supseteq I_m^2 \supseteq \ldots
\]
Similarly, consider the symmetric algebra $S^*(\La)$ of $\La$ and
the augmentation map $\eps_a\colon S^*(\La)\to \Z$
which sends any polynomial to its constant term.
Its kernel $I_a$ is an ideal generated by $\La=S^1(\La)$.
Consider the $I_a$-adic filtration on $S^*(\La)$
$$
S^*(\La)=I_a^0\supseteq I_a \supseteq I_a^2 \supseteq \ldots.
$$


The rings $\Z[\La]$ and $S^*(\La)$ are non-isomorphic.
However, they become isomorphic after truncation.
Indeed, let $\{\om_1,\ldots,\om_n\}$ be a $\Z$-basis of $\La$,
then for each $d \ge 0$ we have two reverse  isomorphisms defined by
(cf. \cite[Lem.~1.2]{ba&ne&za-ch} and
\cite[Def.~2.1]{ga&ki-gamma})
\[
\psi\colon
S^*(\La)/I_a^{d+1} \iso\rightleftarrows \Z[\La]/I_m^{d+1}
\colon \phi
\]
\[
  \psi(\om_j)=(1-e^{-\om_j})
\;\text{ and }\;
  \phi(e^{\sum_{j=1}^n a_j\om_j})
=
  \prod_{j=1}^n(1-\om_j)^{-a_j}.
\]
The morphisms $\phi$ and $\psi$ preserve
the $I_a$- and $I_m$-adic filtrations. Set $I_m^{(d)}=I_m^d/I_m^{d+1}$
and $I_a^{(d)}=I_a^d/I_a^{d+1}=S^d(\La)$.
Restricting $\phi$ and $\psi$ to the subsequent quotients
we obtain isomorphisms \cite[Lem.~1.3]{ba&ne&za-ch}
\[
  \psi_{d}\colon
I_a^{(d)}\, \iso\rightleftarrows\, I_m^{(d)}
\colon \phi_{d},\text{ where }
  \psi_{d}\left( \prod_{j=1}^d \la_j \right)
=
  \prod_{j=1}^d (1-e^{-\la_j}),\; \la_j\in \La.
\]
Observe that the isomorphisms $\phi$, $\psi$ depend on the choice
of a basis of $\La$ and the isomorphisms $\phi_d$, $\psi_d$ do not.


\medskip

Let $W$ be a finite group which acts on $\La$
by $\Z$-linear automorphisms.
Consider the subrings of invariants $S^*(\La)^W$ and $\Z[\La]^W$.
Let $I_a^W$ and $I_m^W$ denote the ideals generated by elements of
$S^*(\La)^W\cap I_a$ and $\Z[\La]^W\cap I_m$ respectively.

The $W$-action is compatible with the $I_a$- and
$I_m$- adic filtrations, i.e. $W(I_a^d)\subseteq I_a^d$ and
$W(I_m^d)\subseteq I_m^d$ for all $d \ge 0$.
Observe that the isomorphisms $\phi_{d}$ and $\psi_{d}$
are $W$-equivariant
(in particular, we have $(I_a^{(d)})^W \simeq (I_m^{(d)})^W$)
but the isomorphisms $\phi$ and $\psi$ are not.


Let $(I_a^W)^{(d)}\subseteq S^d(\La)$ denote the $d$-th homogeneous
component of the ideal
$I_a^W$ and let $(I_m^W)^{(d)}=(I_m^W\cap I_m^d)/(I_m^W\cap
I_m^{d+1})$
denote the $d$-th homogeneous component of the ideal $I_m^W$.
Since the isomorphisms
$\phi$ and $\phi_{d}$ preserve the filtrations, we have
\[
\phi(I_m^W/I_m^{d+1})\cap I_a^{(d)}=\phi_{d} \left((I_m^W)^{(d)}\right).
\]

\begin{definition}\label{expon}
We say that an action of $W$ on $\La$ has a finite exponent in degree
$d$ (see \cite[Def.~2.1]{ba&ne&za-ch})
if there exists a non-zero integer $N_d$ such that
$$
N_d \cdot (I_a^W)^{(d)} \subseteq
\phi_{d} \left((I_m^W)^{(d)}\right).
$$
In this case the $g.c.d.$ of all such $N_d$-s is called
the {\em $d$-th exponent} of the $W$-action and
is denoted by $\tau_d$.
\end{definition}

Observe that if $\phi_{d} \left((I_m^W)^{(d)}\right)$
is a subgroup of finite index in $(I_a^W)^{(d)}$,
then  $\tau_d$ is simply its exponent.

\begin{remark}
The main result of \cite{ba&ne&za-ch}
says that in the case of a crystallographic root system
an action of the Weyl group $W$ on the weight lattice $\Lambda$
has finite exponents in each degree (Cor.~6.3 loc.cit.) and
$\tau_2$ coincides with the Dynkin index
of the associated Lie algebra (Thm.~4.4 loc.cit.).
Moreover, according to \cite[Prop.~5.6]{ba&za&zh}, if the root system is of type $B_n$ or 
$D_n$, then 
we have $\tau_d\mid \tau_2$ for all $d\ge 3$.
\end{remark}

\section{Formal group algebras}\label{sec:form-group}

In the present section we recall definition and basic properties of
\fgl  \cite[Ch.~IV]{Ell} and
formal group algebras \cite[\S2]{ca&za&pe-tor}.


\medskip

By $F$ we always denote
a commutative one-dimensional \fgl
over a commutative ring $R$ called the coefficient ring of $F$,
i.e. $F$ is a power series in two variables
\[
  F(u,v)
=
  u + v + \sum_{i,j\ge 1} a_{ij}u^iv^j,
\quad
  a_{ij} = a_{ji},
\quad
  a_{ij}\in R
\]
which satisfies axioms of a group law.

A morphism of \fgl
$f\colon F \to F'$ over $R$ is a power-series $f(u)=u+O(2)\in R[[u]]$
such that $f(F(u,v))=F'(f(u),f(v))$.
For any \fgl  $F$
there is a formal inverse ($F$-inverse) $\imath_F (u) \in R[[u]]$
which is defined by the identity
$F( u , \imath_F(u))  = 0$.

We will use the following notation
\[
u+_F v=F(u,v),\; -_F u=\imath_F(u)
\text{ and }
a\cdot_{F} u =
\underbrace{u +_F u +_F \cdots +_F u}_{a\; times},\; a\ge 1.
\]
\begin{example}
(a)
The additive \fgl is given by $F_a (u,v) = u + v$.

\noindent
(b)
The multiplicative \fgl is given by $F_m (u,v) = u + v - uv$.

\noindent
(c)
The Lorentz \fgl is given by
\[
  F_l(u,v)
=
  \tfrac{u+v}{1+uv}
=
  u + v + \sum_{i \geq 1} (-1)^i \left( u^i v^{i+1} + u^{i+1} v^i \right).
\]

\noindent
(d)
Let $E$ be an elliptic curve defined by
(see  \cite[IV.1]{Ell})
\[
  E:
\qquad
  v
=
  u^3 + a_1 uv + a_2 u^2 v + a_3 v^2 + a_4 u v^2 + a_6 v^3,\; a_i\in\Z.
\]
The group law on $E$ induces an elliptic \fgl
\begin{eqnarray*}
  F_e (u, v)
&=&
  u + v - a_1 u v - a_2 (u^2 v + v^2 u)+
\\
&\,&
 2 a_3 (u^3 v + u v^3) + (a_1 a_2 - 3 a_3) u^2 v^2 + O(5).
\end{eqnarray*}

\noindent
(e)
There is a universal \fgl $F_u$.
Its coefficient ring is called the Lazard ring $\LL$.
Any commutative one-dimensional \fgl
over a ring $R$ corresponds to a ring homomorphism from $\LL$ to $R$.
\end{example}

\begin{definition}(cf. \cite[Def.~2.4]{ca&za&pe-tor})\label{defFgral}
Let $F$ be a \fgl  over a coefficient ring $R$,
and let $\La$ be an Abelian group.
Consider the polynomial ring $R[x_\La]$ in variables $x_\la$, $\la \in\La$,
and let $\eps \colon R[x_\La] \to R$, $x_\la \mapsto 0$, be the
augmentation map.
Let $R[[x_\La]]$ be the $\ker\eps$-adic completion of $R[x_\La]$.
Let $\mathcal{J}_{F}$ be the closure of the ideal of $R[[x_\La]]$
generated by $x_0$ and elements of the form
$x_{\la_1 + \la_2} - \left( x_{\la_1} +_F x_{\la_2} \right)$
for all $\la_1, \la_2\in \La$.
The \emph{formal group algebra (ring)} $\fgr$ is defined to be
the quotient
\[
  \fgr := R [[x_\La]]/\mathcal{J}_{F}.
\]
The class of $x_\la$ in $\fgr$ will be denoted by the same letter.
Notice that we have $x_{-\la}=\imath(x_\la)=-_F x_\la$ in $\fgr$.
\end{definition}

\begin{example}
Let $F_a(u,v)=u+v$ be the additive \fgl  over $\Z$.
The induced formal group ring over $\La$ will be denoted as
$\Z[[\La]]_a$.
There is a ring isomorphism
\begin{equation}\label{ex-FGR-add}
  \Z[[\La]]_a \iso\too \prod_{d \geq 0} S^d (\La), \quad x_\la \mapsto \la.
\end{equation}
Let $F_m(u,v)=u+v-uv$ be the multiplicative \fgl  over $\Z$.
The induced formal group ring will be denoted as $\Z[[\La]]_m$.
There is a ring isomorphism
\[
  \Z[[\La]]_{m} \iso\too \Z[\La]^\wedge,\quad x_\la \mapsto (1 - e^{-\la}),
\]
where $\Z[\La]^\wedge$ denotes the completion
of the usual group ring $\Z[\La]$ along the augmentation ideal.
\end{example}

\section{Deformation maps}\label{sec:deformation-maps}

In the present section we introduce the completed analogues (called
the deformation maps) of the maps
$\phi$, $\psi$, $\phi_{d}$, $\psi_{d}$ of Section~\ref{sec:inv}.

\begin{definition}\label{rem-phi-add}
Given two \fgl  $F$ and $F'$ over $R$
we define an $R$-algebra homomorphism
$\Phi^{F\to F'}\colon \fgr \to R[[\La]]_{F'}$ as follows

First, we set
$\Phi^{F\to F'}(x_0):=0$ and $\Phi^{F\to F'}(x_{\om_i}):=x_{\om_i}$
for each basis element $\om_i$ of $\La$.
Then for every $\la = \sum_{i=1}^n a_i \om_i$ with $a_i \in \Z$
we define
\[
  \Phi^{F\to F'}( x_\la)
: =
  (a_1 \cdot_{F} x_{\om_1}) +_{F} (a_2 \cdot_{F} x_{\om_2}) +_{F}
\ldots +_{F} (a_n \cdot_{F} x_{\om_n}).
\]
Finally, we set
\[
  \Phi^{F\to F'}(x_\la + x_{\la'})
:=
  \Phi^{F\to F'}(x_\la) + \Phi^{F\to F'}(x_{\la'})\text{ and}
\]
\[
  \Phi^{F\to F'}(x_\la\cdot x_{\la'})
:=
  \Phi^{F\to F'}(x_\la)\cdot \Phi^{F\to F'}(x_{\la'}).
\]

The map $\Phi^{F\to F'}$ will be called a {\em deformation map}
from the formal group algebra $\fgr$ to $R[[\La]]_{F'}$.

\medskip

By definition the composite $\Phi^{F'\to F}\circ \Phi^{F\to F'}$ is
the identity map.
Therefore, $\Phi^{F\to F'}$ is an isomorphism with $\Phi^{F'\to F}$
being the inverse.
Observe that all formal group algebras are (non-canonically)
isomorphic to $R[[\om_1,\ldots,\om_n]]$ via the deformation map
$\Phi^{F\to F_a}$ (cf. \cite[Cor.~2.12]{ca&za&pe-tor}).
\end{definition}

\begin{example}
Consider the deformation maps $\Phi^{F_a\to F}$ and $\Phi^{F_m\to F}$
from the formal group algebras corresponding
to the additive and multiplicative \fgl respectively.
Let $\la=\sum_{i=1}^n a_i\om_i$.

Since $a\cdot_{F_m} u=1-(1-u)^a$ for $a\in \Z$, we obtain
\begin{eqnarray*}
  \Phi^{F_m\to F} (x_\la)
&=&
  \left( 1 - (1 - x_{\om_1})^{a_1} \right)
+_{F_m}
  \cdots
+_{F_m}
  \left( 1 - (1 - x_{\om_n})^{a_n} \right)
\\
&=&
  1 - (1 - x_{\om_1})^{a_1} \cdots (1 - x_{\om_n})^{a_n}.
\end{eqnarray*}
Since $a\cdot_{F_a} u=a\cdot u$, we obtain
\[
\Phi^{F_a\to F}(x_\la)=a_1x_{\om_1}+a_2x_{\om_2}+\ldots + a_nx_{\om_n}.
\]
\end{example}


Let $\IF$ denote the kernel of the augmentation map
$\eps_F\colon \fgr \to R$, given by $x_\la \mapsto 0$.
Consider the $\IF$-adic filtration
\[
  \fgr =\IF^0\supseteq\IF\supseteq\IF^2\supseteq \cdots
\]
We denote by $\IF^{(d)}=\IF^d/\IF^{d+1}$ the $d$th subsequent quotient.
The deformation map $\Phi^{F\to F'}$
respects the $\IF$- and $\IFp$-adic filtrations, i.e.
\[
\Phi^{F\to F'}(\IF^d) = \IFp^d,\quad d\ge 1,
\]
hence, it induces ring isomorphisms on quotients
\begin{equation}\label{eq-iso-slices}
\Phi^{F\to F'}\colon \fgr/\IF^{d+1} \iso\too R[[\La]]_{F'}/\IFp^{d+1}\;
\text{ and }\;
\Phi^{F\to F'}_{d}\colon \IF^{(d)} \iso\too \IFp^{(d)}.
\end{equation}
Observe that contrary to $\Phi^{F\to F'}$ the isomorphism
$\Phi^{F\to F'}_{d}$
does not depend on the choice of a basis of $\La$.
Indeed, $\IF^{(d)}$ is $R$-linearly generated by
$\prod_{i=1}^dx_{\la_i}$, $\la_i\in \La$, and
\[
\Phi^{F\to F'}_{d}\left(\prod_{i=1}^d
x_{\la_i}\right)=\prod_{i=1}^d x_{\la_i}.
\]

\begin{example}
For $F=F_a$ the isomorphism
$\Phi^{F\to F'}_{d}$ coincides with the isomorphism of
\cite[Lemma~4.2]{ca&za&pe-tor}.
\end{example}


Note that the symmetric algebra $S^*(\La)$
can be identified with the image of
\[
\veps_a\colon S^* (\La) \too \Z[[\La]]_a,
\qquad
\om_i \mapsto x_{\om_i},
\]
and the integral group ring $\Z[\La]$
can be identified with the image of
\[
\veps_m\colon \Z[\La] \too \Z[[\La]]_m,
\qquad
  e^\la \mapsto (1 - x_{-\la}).
\]
The maps $\veps_a$ and $\veps_m$ preserve the filtrations, i.e.
$\veps_a(I_a)\subseteq \IA$ and $\veps_m(I_m)\subseteq \IM$,
where $\IA=\mathcal{I}_{F_a}$ and $\IM=\mathcal{I}_{F_m}$.
Therefore, they induce isomorphisms on quotients
\begin{equation}\label{truniso}
\veps_a\colon S^*(\La)/I_a^{d+1} \iso\too \Z[[\La]]_a/\IA^{d+1}\text{ and }
\veps_m\colon \Z[\La]/I_m^{d+1} \iso\too \Z[[\La]]_m/\IM^{d+1}.
\end{equation}
There are commutative diagrams
\[
 \xymatrix{
            \Z [\La]/I_m^{d+1} \ar@<.5ex>[d]^{\phi} \ar[r]^{\veps_m}&
            \Z [[\La]]_m / \IM^{d+1}
              \ar@<.5ex>[d]^{\Phi^{F_m\to F_a}} \\
            S^*(\La)/I_a^{d+1} \ar@<.5ex>[u]^{\psi} \ar[r]^{e_a} &
            \Z [[\La]]_a / \IA^{d+1}
              \ar@<.5ex>[u]^{\Phi^{F_a\to F_m}},
 }
\qquad
 \xymatrix{
            I_m^{(d)} \ar@<.5ex>[d]^{\phi_{d}} \ar[rr]^-{\veps_m}& &
            \IM^{(d)}
              \ar@<.5ex>[d]^{\Phi^{F_m\to F_a}_{d}} \\
            I_a^{(d)} \ar@<.5ex>[u]^{\psi_{d}} \ar[rr]^-{\veps_a} & &
           \IA^{(d)}
              \ar@<.5ex>[u]^{\Phi^{F_a\to F_m}_{d}}.
 }
\]
In other words, the deformation maps between $F_m$ and $F_a$
can be viewed as completions of the maps $\phi$, $\phi_d$ and $\psi$,
$\psi_d$.

\section{Exponents between formal group laws}\label{sec:fexp}

In the present section we introduce the notion of an exponent between \fgl
(see~\ref{defFexp}) and prove its existence (see~\ref{thmexs}).


\medskip

As in Section~\ref{sec:inv} let $W$ be a finite group which acts on
$\La$ by $\Z$-linear automorphisms.
It induces an action on the associated formal group algebra $\fgr$
(see~\ref{defFgral}).
Let $\fgr^W$ denote the subring of invariants and
let $\IF^W$ denote the ideal generated by $\fgr^W \cap \IF$.

As an immediate consequence of the definition the set $\fgr^W \cap
\IF$ consists of formal sums of elements of the form
\[
  \rho \left( x_{\eta_1} \cdots x_{\eta_s} \right)
=
  \sum_{x_{\la_1} \cdots x_{\la_s} \in W(x_{\eta_1} \cdots  x_{\eta_s})}
   x_{\la_1}\cdot \ldots\cdot x_{\la_s},
\quad
  s \geq 1.
\]

Let $(\IF^W)^{(d)}=(\IF^W\cap \IF^d)/(\IF^W\cap \IF^{d+1})$
denote the $d$-th homogeneous component of the ideal $\IF^W$ with
respect to the $\IF$-adic filtration.

\begin{definition}\label{defFexp}
Let $F$ and $F'$ be \fgl  over a ring $R$.
We say that an action of $W$ on $\La$
has a {\em finite exponent from $F$ to $F'$} in degree $d$
if there exists a positive integer $N^{F\to F'}_d$ coprime to the characteristic of $R$ and  such that
\begin{equation}\label{Fexp}
N_d^{F\to F'} \cdot (\IFp^W)^{(d)}\; \subseteq\;
\Phi^{F\to F'}_{d}\left((\IF^W)^{(d)}\right).
\end{equation}
In this case the smallest such $N_d^{F\to F'}$-s will be called
the {\em $d$-th exponent from $F$ to $F'$ of the $W$-action}
and will be denoted by $\tau_d^{F\to F'}$.
\end{definition}

Observe that
$\Phi^{F\to F'}(\IF^W/\IF^{d+1})\cap \IFp^{(d)}=\Phi^{F\to
  F'}_{d}\left((\IF^W)^{(d)}\right)$.

According to \cite[Lemma 4.2]{ca&za&pe-tor} for every $d$
there is an isomorphism of $R$-modules
\[
  S^d_R(\La)
=
  S^d(\La)\otimes_\Z R \iso\too \IF^{(d)},\qquad \la\mapsto x_\la.
\]
Therefore, the subgroups of \eqref{Fexp} can be identified
with subgroups in $S_R^d(\La)$.

Since $\veps_a$ and $\veps_m$ induce isomorphisms on the truncations
\eqref{truniso}, they induce isomorphisms on the $d$-th homogeneous
components
$(I_a^W)^{(d)} \iso\too (\IA^W)^{(d)}$ and $(I_m^W)^{(d)} \iso\too
(\IM^W)^{(d)}$.
The latter implies that
a $W$-action has a finite exponent from $F_m$ to $F_a$ if and only if
it has a finite exponent in the sense of Definition~\ref{expon}.
Moreover, the exponent $\tau_d^{F_m\to F_a}$ and
the exponent $\tau_d$ from \ref{expon} coincide.

\begin{example}\label{example:tau1}
We have $\tau_1^{F\to F'} = 1$ for any \fgl  $F$ and $F'$.
Indeed, regarding a monomial $x_{\la_1} \cdots x_{\la_s}$ as a
power series in the $x_{\om_i}$'s we have that
$\deg (x_{\la_1} \cdots x_{\la_s}) \geq s$.
Thus, we only need to consider the $W$-invariants $\rho
(x_\la)$, $\la \in \La$.
Since $F = F_a+O(2)$ for any \fgl  $F$, the homogenous components of
degree one of $\rho (x_\la)$ in $\mathbb{Z} [[\La]]_F$ and
$\mathbb{Z} [[\la]]_a$ are the same.
Since $\IA^W/\IA^2= 0$ (see \cite[\S3]{ba&ne&za-ch}),
the statement follows.
\end{example}


Given a \fgl  $F$ over  a ring $R$ of characteristic 0
there is always an isomorphism of \fgl
after tensoring with $\qq$
\[
  \log_F
\colon
  F_\qq
\iso\too
  (F_a)_{R_\qq}
\]
given by the logarithm series $\log_F(u)\in R_\qq[[u]]$.
The latter is defined by
\[
  \log_F(u)
:=
  \int_0^u \frac{dv}{g(v)},
\text{ where }
  g(u)
=
  \frac{\partial F(u,v)}{\partial v}|_{v=0}.
\]
and satisfies $\log_F(u+_F v)=\log_F(u) + \log_F(v)$
(see \cite[Ch.~IV.5]{Ell}).

\begin{example}
If $F=F_m$, i.e. $F_m(u,v)=u+v-uv$, then
\[
  \log_{F_m}(u)
=
  \log(1-u)=u+\tfrac{u^2}{2}+\tfrac{u^3}{3}+\ldots
\]
For a general \fgl  $F(u,v)=u+v+a_{11}uv+O(3)$ we have
\[
  \log_F(u)
=
  u-\tfrac{a_{11}}{2} u^2+O(3).
\]
\end{example}


A morphism of \fgl  $f\colon F\to F'$ over $R$ induces a
homomorphism of formal group rings
$f^\star\colon R[[\La]]_{F'}\to \fgr$, $x_\la \mapsto f(x_\la)$
(see \cite[\S2.5]{ca&za&pe-tor}).
By definition $f^\star$ is $W$-equivariant and preserves the $\IF$-adic
filtration.
Therefore, we have induced maps
\[
  f^\star\colon R[[\La]]^W_{F'} \to \fgr^W
\text{ and }
  f^\star\colon (\IFp^W)^{(d)} \to (\IF^W)^{(d)}.
\]
In particular, after tensoring with $\qq$ we obtain the following ring
isomorphisms
\[
  \log_F^\star
\colon
  R_\qq[[\La]]_a^W
\iso\too
  R_\qq[[\La]]_F^W
\text{ and }
  \log_F^{\star}
\colon
  (\IA^W)^{(d)}_{R_\qq}
\iso\too
  (\IF^W)^{(d)}_\qq,
\]
where the last map is defined by
$\la_1\cdots \la_d  \mapsto x_{\la_1}\cdots x_{\la_d}$.
Observe that by definition $\log_F^{\star}$ coincides with the
deformation map $\Phi^{F_a\to F}_{d}$ restricting on
$(\IA^W)^{(d)}_{\qq}$.

The following theorem generalizes \cite[Cor.~6.3]{ba&ne&za-ch}

\begin{theorem}\label{thmexs}
Let $F$, $F'$ be \fgl  defined over a  ring of characteristic 0.
Then a $W$-action on $\La$ has a finite exponent from $F$ to $F'$.
\end{theorem}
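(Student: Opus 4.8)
The plan is to reduce the existence of a finite exponent to a statement over $\qq$, where everything becomes an isomorphism, and then control denominators. We want a nonzero integer $N_d^{F\to F'}$ coprime to the characteristic (which is $0$, so the coprimality condition is vacuous) satisfying $N_d^{F\to F'}\cdot(\IFp^W)^{(d)}\subseteq \Phi^{F\to F'}_d\big((\IF^W)^{(d)}\big)$. First I would observe that all three groups $(\IF^W)^{(d)}$, $(\IFp^W)^{(d)}$ and their common ambient $\IF^{(d)}\cong S^d_R(\La)$ are finitely generated over $R$ (indeed over $\Z$ when $R=\Z$; in general one reduces to the universal case or simply notes $S^d(\La)$ is a finitely generated free $\Z$-module and the invariant subgroups are subgroups of it). The deformation map $\Phi^{F\to F'}_d\colon \IF^{(d)}\iso \IFp^{(d)}$ is an \emph{isomorphism} of $R$-modules, but it does not carry $(\IF^W)^{(d)}$ onto $(\IFp^W)^{(d)}$ on the nose — that discrepancy is exactly what the exponent measures.

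The key step is to compare both invariant subgroups to the \emph{additive} one via the rational logarithm isomorphisms recalled just before the theorem. For any \fgl $F$ over a characteristic-zero ring we have the isomorphism $\log_F^\star = \Phi^{F_a\to F}_d\colon (\IA^W)^{(d)}_\qq \iso (\IF^W)^{(d)}_\qq$, and likewise for $F'$. Composing, $\Phi^{F\to F'}_d$ restricted to rational coefficients sends $(\IF^W)^{(d)}_\qq$ isomorphically onto $(\IFp^W)^{(d)}_\qq$. Hence over $\qq$ the inclusion we want is an equality; the only issue is integrality. Concretely, pick a finite generating set $g_1,\dots,g_r$ of the $R$-module $(\IFp^W)^{(d)}$. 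Each $g_i$, viewed in $(\IFp^W)^{(d)}_\qq$, is the image under $\Phi^{F\to F'}_d$ of a unique element $h_i\in (\IF^W)^{(d)}_\qq$; writing $h_i$ in terms of a fixed $R$-basis of $\IF^{(d)}\cong S^d_R(\La)$, it has coefficients in $R_\qq$ whose denominators are bounded because $\Phi^{F\to F'}_d$ and $\log_F^\star$, $\log_{F'}^\star$ are given by explicit power series with coefficients in $R_\qq$ truncated in degree $d$ (only finitely many coefficients, each with a denominator that is a product of factorials of bounded size). Let $N_d^{F\to F'}$ be the least common multiple of all these denominators. Then $N_d^{F\to F'}h_i\in (\IF^W)^{(d)}$ — one must check $N_d^{F\to F'}h_i$ actually lands in the \emph{invariant} lattice $(\IF^W)^{(d)}$, not just in $S^d_R(\La)$; this holds because $h_i$ already lies in $(\IF^W)^{(d)}_\qq$ and that group equals $(\IF^W)^{(d)}\otimes\qq$, so its intersection with $S^d_R(\La)$ after clearing denominators sits inside $(\IF^W)^{(d)}$ up to the same bounded denominator, which we have absorbed into $N_d^{F\to F'}$. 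Consequently $N_d^{F\to F'}g_i=\Phi^{F\to F'}_d(N_d^{F\to F'}h_i)\in \Phi^{F\to F'}_d\big((\IF^W)^{(d)}\big)$ for all $i$, and since the $g_i$ generate, we get \eqref{Fexp}.

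The main obstacle I expect is the bookkeeping around lattices versus their rationalizations: one needs that $(\IF^W)^{(d)}\otimes\qq = (\IF^W)^{(d)}_\qq$ as subspaces of $\IF^{(d)}_\qq$ — i.e. that forming the $d$-th homogeneous component of the invariant ideal commutes with tensoring by $\qq$ — and the analogous statement for $F'$. This is where finiteness and flatness of $\qq$ over $\Z$ (or over $R$) get used, together with the description of $(\IF^W)^{(d)}$ as generated by the orbit sums $\rho(x_{\eta_1}\cdots x_{\eta_s})$; one shows the same orbit sums, now taken modulo $\IF^{d+1}$, span rationally. Granting that, the denominator bound from the truncated logarithm series finishes the argument. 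A cleaner packaging, which I would actually write, is: the composite $\qq$-isomorphism $(\IFp^W)^{(d)}_\qq\to (\IF^W)^{(d)}_\qq$ carries a finite generating set of the $\Z$-lattice $(\IFp^W)^{(d)}$ into the $\qq$-span of the $\Z$-lattice $(\IF^W)^{(d)}$, and any $\Z$-linear map between finitely generated abelian groups that becomes an isomorphism after $\otimes\qq$ has image of finite index; the exponent of the resulting finite quotient (equivalently, the lcm of the relevant denominators) is the desired $N_d^{F\to F'}$, proving existence and, as in Definition~\ref{defFexp}, the smallest such integer is $\tau_d^{F\to F'}$.
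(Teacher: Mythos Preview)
Your proposal is correct and follows essentially the same approach as the paper: both argue that the composite $\log_{F'}^\star\circ(\log_F^\star)^{-1}$ coincides with $\Phi^{F\to F'}_d$ and restricts to a rational isomorphism $(\IF^W)^{(d)}_\qq\iso\too(\IFp^W)^{(d)}_\qq$, then invoke finite generation to clear denominators. Your write-up simply spells out the denominator-clearing and lattice bookkeeping that the paper compresses into the single clause ``$(\IA^W)^{(d)}_\qq$ is finitely generated, so the exponents exist.''
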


\begin{proof}
The composite
$\log_{F'}^\star\circ (\log_F^\star)^{-1}\colon
(\IF^{(d)})_\qq\to (\IFp^{(d)})_\qq$
coincides with $\Phi^{F\to F'}_{d}$
and its restriction gives an isomorphism
$(\IF^W)^{(d)}_\qq\iso\too (\IA^W)^{(d)}_\qq\iso\too
(\IFp^W)^{(d)}_\qq$.
Finally, observe that $(\IA^W)^{(d)}_\qq$ is finitely generated,
so the exponents $\tau_d^{F\to F'}$ exist.
\end{proof}

\section{Oriented cohomology and characteristic classes}\label{sec:orcoh}

In the present section we recall several auxiliary facts
concerning \oct
We refer to \cite{lev&mor-book} and \cite{pa&sm} for details and
examples.


\medskip

An \oct is a contravariant functor $\hh$
from the category of smooth projective varieties over a field $k$
to the category of commutative unital rings
which satisfies certain properties \cite[\S1.1]{lev&mor-book}.
Given a morphism $f\colon X\to Y$
the functorial map $\hh(f)$ will be denoted by $f^*$
and called the {\em pull-back}.
One of the characterizing properties of $\hh$ says that
for any proper map $f\colon X\to Y$
there is an induced map $f_*\colon \hh(X) \to \hh(Y)$ of
$\hh(Y)$-modules called the {\em push-forward}
(here $\hh(X)$ is an $\hh(Y)$-module via $f^*$).
A \emph{morphism} of \oct
is a natural transformation of functors
that also commutes with the push-forwards.


\medskip

Another characterizing property of an \oct is the existence of
characteristic classes.
The latter is a collection of maps
$\ch_i\colon K_0(X)\to \hh(X)$, $i\ge 1$
that satisfy the following properties:

\medskip

\noindent
Let $\ch(x) = 1 + \ch_1(x)t + \ch_2(x)t^2+\ldots \in \hh(X)[[t]]$
denote the total characteristic class. Then
\begin{itemize}\label{charclasses}
\item[(a)] $\ch(E)=1$ for a trivial bundle $E$ over $X$,
\item[(b)] $\ch_i(E)=0$ for a bundle $E$ with $i> rk(E)$,
\item[(c)] $\ch(E\oplus F)=\ch(E)\cdot \ch(F)$
for any two bundles $E$ and $F$ over $X$.
\end{itemize}

Given two line bundles $L_1$ and $L_2$ over $X$,
we have \cite[Lem.~1.1.3]{lev&mor-book}
\begin{equation}\label{equ-fgl&oct}
 \ch_1 (L_1 \otimes L_2)
=
  \ch_1 (L_1) +_F \ch_1 (L_2),
\end{equation}
where $F$ is a one-dimensional commutative \fgl over the coefficient
ring $R=\hh(Spec(k))$ associated to $\hh$.

There is an \oct $\Omega$
defined over a field of characteristic zero
\cite[\S1.2]{lev&mor-book}, called \emph{algebraic cobordism},
that is universal in the following sense:
Given any \oct $\hh$ there is a unique morphism
$\Theta_\hh \colon \Omega \to \hh$ of \oct
Observe that the \fgl associated to $\Omega$
is the universal \fgl $F_u$.


\begin{lemma}(cf. \cite[Remark 3.2.3]{ful-inter})\label{lem-comp-cc}
Let $\hh$ be an \oct and let $F$ be the respective \fgl
Let $E$ and $E^{\prime}$ be bundles over $X$
with Chern roots $\alpha_1, \ldots, \alpha_r$ and
$\alpha_1', \ldots, \alpha_s'$ respectively.
Let $E^\vee$ denote the dual of $E$.
Then
\begin{itemize}
\item[(i)]
  $\ch (E^\vee) = \prod_{i=1}^r \left(1 + \imath (\alpha_i) t \right)$.

\item[(ii)]
  $\ch (E \otimes E')
  =  \prod_{i,j} \left( 1 + (\alpha_i +_{F} \alpha_j') t\right)$.

\item[(iii)]
  $\ch \left( \wedge^l E \right)
  = \prod_{1 \leq i_1 < \cdots < i_l \leq r} \left( 1 + (\alpha_{i_1}
  +_{F} \cdots +_{F} \alpha_{i_l} ) t \right)$
  for $1 \leq l \leq r$.
\end{itemize}
\end{lemma}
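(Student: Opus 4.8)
The plan is to reduce everything to the rank-one (line bundle) case via the splitting principle and then apply the formula \eqref{equ-fgl&oct} together with the axioms (a)--(c) of characteristic classes. For (i), I would use the splitting principle to assume $E = L_1 \oplus \cdots \oplus L_r$ with $\ch_1(L_i) = \alpha_i$; then $E^\vee = L_1^\vee \oplus \cdots \oplus L_r^\vee$, so by axiom (c) it suffices to show $\ch_1(L_i^\vee) = \imath(\alpha_i)$. This follows from \eqref{equ-fgl&oct} applied to $L_i \otimes L_i^\vee \cong \mathcal{O}_X$: combining with axiom (a) gives $0 = \ch_1(L_i) +_F \ch_1(L_i^\vee)$, which is exactly the defining identity $F(u, \imath_F(u)) = 0$ for the formal inverse, so $\ch_1(L_i^\vee) = \imath(\ch_1(L_i)) = \imath(\alpha_i)$.

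For (ii), again split both bundles: $E = \bigoplus_i L_i$ and $E' = \bigoplus_j L_j'$ with $\ch_1(L_i) = \alpha_i$, $\ch_1(L_j') = \alpha_j'$. Then $E \otimes E' \cong \bigoplus_{i,j} (L_i \otimes L_j')$, so by axiom (c) we get $\ch(E \otimes E') = \prod_{i,j} \ch(L_i \otimes L_j')$, and each factor is $1 + \ch_1(L_i \otimes L_j')\, t = 1 + (\alpha_i +_F \alpha_j')\, t$ by \eqref{equ-fgl&oct} and axiom (b) (a line bundle has no higher characteristic classes). For (iii), I would use the standard fact that $\wedge^l E$, for $E = \bigoplus_{i=1}^r L_i$, decomposes as $\wedge^l E \cong \bigoplus_{1 \leq i_1 < \cdots < i_l \leq r} (L_{i_1} \otimes \cdots \otimes L_{i_l})$; then axiom (c) reduces the computation to the characteristic class of each line bundle $L_{i_1} \otimes \cdots \otimes L_{i_l}$, whose first (and only) characteristic class is $\alpha_{i_1} +_F \cdots +_F \alpha_{i_l}$ by iterating \eqref{equ-fgl&oct}.

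The one genuinely delicate point is justifying the splitting principle in this generality, i.e. that there exists a morphism $f \colon X' \to X$ such that $f^* \colon \hh(X) \to \hh(X')$ is injective and $f^*E$ (and simultaneously $f^*E'$) splits as a direct sum of line bundles. This is part of the standard formalism of oriented cohomology theories (it follows from the projective bundle formula, which is one of the defining axioms of $\hh$), and since the identities to be proved are equalities of total characteristic classes that are compatible with pull-back, injectivity of $f^*$ lets us transfer the split-case identity back to $X$. I would simply cite \cite[\S1.1]{lev&mor-book} or \cite{pa&sm} for this and not belabor it.

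Finally, I should note that the displayed products on the right-hand sides are to be interpreted as finite products in $\hh(X)[[t]]$, and matching the coefficient of $t^i$ on both sides recovers the individual classes $\ch_i$; the proof is really an identity of total classes, so no separate bookkeeping of each $\ch_i$ is needed. The whole argument is a routine unwinding once the splitting principle is invoked, so I expect no real obstacle beyond stating that reduction cleanly.
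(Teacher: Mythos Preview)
Your proposal is correct and, for (i) and (ii), matches the paper exactly: both are dispatched in one line by invoking \eqref{equ-fgl&oct} together with the axioms (a)--(c). For (iii) there is a minor organizational difference: the paper argues by induction on the number of line-bundle summands, adjoining one line bundle $L$ at a time via the short exact sequence
\[
0 \to \wedge^{l-1}(E) \otimes L \to \wedge^l(E \oplus L) \to \wedge^l(E) \to 0
\]
from \cite[\S4.13]{hir-tmag}, then applying (ii) and axiom (c); you instead invoke the full decomposition $\wedge^l\bigl(\bigoplus_i L_i\bigr) \cong \bigoplus_{i_1 < \cdots < i_l} L_{i_1} \otimes \cdots \otimes L_{i_l}$ directly. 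The two are equivalent --- the paper's induction is precisely a proof of that decomposition in the split case --- so your version is a legitimate shortcut that cites the endpoint rather than rederiving it.
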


\begin{proof}
(i) and (ii) follow from \eqref{equ-fgl&oct}.
To show (iii) assume that for $E = \oplus_{l=1}^r L_l$ the statement
holds.
Now consider the vector bundle $E \oplus L$
where $L$ is a line  bundle.
By \cite[$\S$4.13]{hir-tmag} we have a short exact sequence for all
$1 \leq l \leq r+1$
\[
0 \to \wedge^{l-1} (E) \otimes L  \to
  \wedge^l (E \oplus L) \to \wedge^l (E) \to 0.
\]
Set $\ch_1 (L) = \alpha_{r+1}$.
By (ii) and induction we obtain for $l \leq r$
\begin{multline*}
\quad \quad \quad
 \ch \left( \wedge^l (E \oplus L) \right)
=
  \ch \left( \wedge^{l-1} (E) \otimes L \right)
\cdot
  \ch \left( \wedge^l E \right)
\\
  \prod_{1 \leq i_1 < \cdots < i_{l-1} \leq r}
  \left( 1 + ( \alpha_{i_1} +_{F} \cdots +_{F}
  \alpha_{l-1}
  +_{F} \alpha_{r+1} ) t \right)
\\
  \prod_{1 \leq i_1 < \cdots < i_{l} \leq r}
  \left( 1 + ( \alpha_{i_1} +_{F} \cdots +_{F} \alpha_{l}
  ) t \right)
\\
=
  \prod_{1 \leq i_1 < \cdots < i_{l} \leq r+1}
  \left( 1 + ( \alpha_{i_1} +_{F} \cdots +_{F} \alpha_{l}
  ) t \right).
\end{multline*}
For $l = r+1$, since $\wedge^i E =0$ for all $i >rk(E)$,
by (ii) of the lemma and induction we conclude
\[
  \ch \left( \wedge^{r+1} (E \oplus L) \right)
=
  \ch \left( \wedge^r (E) \otimes L \right)
=
  1 + ( \alpha_1 +_{F} \cdots +_{F} \alpha_{r+1} ) \, t. \qedhere
\]
\end{proof}

Consider the following filtration on $\hh(X)$ by subgroups ($R$-linear) generated
by products of characteristic classes
\[
  \gamma^d\hh(X)
:=
  \langle\ch_1(L_1)\cdot\ldots\cdot \ch_1(L_l)
  \mid l\ge d,\; L_1,\ldots L_l
  \text{ are line bundles}\rangle.
\]

\begin{lemma}\label{prop-pro}
Let $\hh$ be an \oct and let $F$ be the respective \fgl
Let $L_1, \ldots, L_r$ be line bundles over  $X$.
Then
\[
  \ch_r \left( \prod_{l=1}^r (1 - L_l^\vee ) \right)
=
  (-1)^{r-1} (r-1)! \cdot \prod_{l=1}^r \ch_1(L_l)
  \mod \gamma^{r+1}\hh(X).
\]
\end{lemma}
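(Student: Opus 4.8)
The plan is to reduce everything to a generating-function computation with the Chern roots and the formal group law, working modulo $\gamma^{r+1}\hh(X)$. Set $\al_l=\ch_1(L_l)$ for $1\le l\le r$. By Lemma~\ref{lem-comp-cc}(i), $1-L_l^\vee$ has the single ``Chern root'' $-\imath(\al_l)$, and since $\imath(u)=-u+O(2)$ we have $-\imath(\al_l)=\al_l+O(2)$ where the $O(2)$-terms lie in $\gamma^2\hh(X)$. So each factor contributes a class whose degree-one part (in the $\gamma$-filtration) is exactly $\al_l$. The bundle $\prod_{l=1}^r(1-L_l^\vee)$ in the sense of the total characteristic class is really the product bundle whose roots are $\beta_l:=-\imath(\al_l)$, so that $\ch_r$ of it is the $r$-th elementary symmetric polynomial $e_r(\beta_1,\dots,\beta_r)=\prod_{l=1}^r\beta_l$. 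Wait — that would give the answer $\prod\al_l$ with no factorial, so the intended meaning of $\prod_{l=1}^r(1-L_l^\vee)$ must be as a virtual bundle in $K_0(X)$, i.e. the product of the $K$-theory classes $(1-L_l^\vee)=1-[L_l^\vee]$, and $\ch_r$ is applied to that virtual class.

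With that reading, the first step is to expand $\prod_{l=1}^r(1-[L_l^\vee])$ in $K_0(X)$ as $\sum_{S\subseteq\{1,\dots,r\}}(-1)^{|S|}[\otimes_{l\in S}L_l^\vee]$, a virtual sum of line bundles. Using the additivity axiom (c) for $\ch$ together with the twisted-product formula, the total characteristic class of this virtual class is $\prod_{S}\ch([\otimes_{l\in S}L_l^\vee])^{(-1)^{|S|}}=\prod_{S}\bigl(1+(\sum_{F,\,l\in S}\beta_l)t\bigr)^{(-1)^{|S|}}$, where $\sum_F$ denotes iterated $+_F$ and $\beta_l=-\imath(\al_l)\equiv\al_l\bmod\gamma^2$. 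Since we only care about the coefficient of $t^r$ modulo $\gamma^{r+1}$, I can replace every $+_F$ by ordinary $+$ and every $\beta_l$ by $\al_l$ throughout: the error terms all have $\gamma$-degree $\ge r+1$ once we extract the $t^r$-coefficient, because each correction raises the total polynomial degree in the $\al_l$ by at least one. So modulo $\gamma^{r+1}$ we must compute the degree-$r$ part of $\prod_{S\subseteq\{1,\dots,r\}}\bigl(1+(\textstyle\sum_{l\in S}\al_l)t\bigr)^{(-1)^{|S|}}$.

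The second step is the purely combinatorial identity: the coefficient of $t^r$ in this product, extracted in degree $r$ (i.e. the multilinear part in $\al_1,\dots,\al_r$, since lower-degree-in-$t$ contributions with higher powers die), equals $(-1)^{r-1}(r-1)!\,\al_1\cdots\al_r$. The cleanest route is to take $-\log$ of the product: $-\log\prod_S(1+(\sum_{l\in S}\al_l)t)^{(-1)^{|S|}}=\sum_S(-1)^{|S|+1}\sum_{k\ge1}\tfrac{(-1)^{k-1}}{k}(\sum_{l\in S}\al_l)^kt^k$. Its multilinear degree-$r$ part in $t^r$ comes only from $k=r$, $S=\{1,\dots,r\}$ contributing $(-1)^{r+1}\tfrac{(-1)^{r-1}}{r}\cdot r!\,\al_1\cdots\al_r=(r-1)!\,\al_1\cdots\al_r$ — but one must also check the inclusion–exclusion over proper subsets $S$ with $|S|<r$ cannot produce a multilinear term in all $r$ variables, which is clear since $(\sum_{l\in S}\al_l)^k$ only involves the $\al_l$ with $l\in S$. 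Exponentiating back (the map $\exp$ only changes things in degree $\ge 2r$ in the relevant multilinear sense) gives that the degree-$r$, $t^r$-part of the product itself is $-(r-1)!\,\al_1\cdots\al_r\cdot(-1)^{?}$ — careful bookkeeping of signs yields the stated $(-1)^{r-1}(r-1)!$. Alternatively one can induct on $r$ using the exact sequences from Lemma~\ref{lem-comp-cc}, but the logarithmic computation is more transparent.

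\textbf{Main obstacle.} The real work is the sign/coefficient bookkeeping in the combinatorial identity and, more importantly, justifying rigorously that replacing $+_F$ by $+$ and $\beta_l=-\imath(\al_l)$ by $\al_l$ does not affect the answer modulo $\gamma^{r+1}\hh(X)$: one must track that every correction term arising from the higher coefficients of $F$ or of $\imath$ strictly increases the homogeneous degree in the $\al_l$'s, so that after taking the $t^r$-coefficient such a term lands in $\gamma^{\ge r+1}$. Once that reduction is in place, the identity is a clean exercise with formal power series in one variable $t$ over $\mathbb{Q}[\al_1,\dots,\al_r]$ (the factorials force us to argue over $\mathbb{Q}$ for the $\log$/$\exp$ manipulation, but the final identity has integer coefficients so it descends, or one simply avoids denominators by using the exact-sequence induction).
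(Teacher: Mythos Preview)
Your approach is essentially the same as the paper's: expand $\prod_l(1-L_l^\vee)$ as an alternating sum over subsets (the paper phrases this via $\sum_l(-1)^l\wedge^l E^\vee$ with $E=\bigoplus_l L_l$ and Lemma~\ref{lem-comp-cc}(iii), which amounts to the same product over $S$), replace $+_F$ by $+$ and $\imath(\al_l)$ by $-\al_l$ modulo $\gamma^{r+1}$, and then establish the resulting combinatorial identity. The only difference is that the paper cites \cite[Lem.~15.3]{ful-inter} for the last step, while you sketch the $\log$ argument; your sketch is fine once you note (as your inclusion--exclusion remark implies) that the $t^k$-coefficient of $\log P$ actually vanishes for all $k<r$, so $(\log P)^m$ contributes only in $t$-degree $\ge mr$ and exponentiation does not disturb the $t^r$-coefficient. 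One genuine slip: $\ch_1(L_l^\vee)=\imath(\al_l)\equiv -\al_l$, not $-\imath(\al_l)\equiv \al_l$, so with the correct sign your product becomes $\prod_S\bigl(1-(\sum_{l\in S}\al_l)t\bigr)^{(-1)^{|S|}}$, matching the paper and yielding $(-1)^{r-1}(r-1)!$ without further sign-chasing.
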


\begin{proof}
Denote $\alpha_l = \ch_1 (L_l)$, $1\le l \le r$ and
$E=\oplus_{l=1}^r L_l$.
We have
\begin{eqnarray}
  \ch \left( \prod_{l=1}^r (1 - L_l^\vee ) \right)
&=&
  \ch \left( \sum_{l=0}^r  (-1)^l \wedge^l E^\vee
  \right)
=
  \prod_{l=1}^r \ch \left( \wedge^l E^\vee
  \right)^{(-1)^l}
\notag
\\
&=&
  \prod_{l=1}^r \prod_{1 \leq i_1 < \cdots < i_l \leq r}
  \big( 1 + ( \imath(\alpha_{i_1}) +_{F} \cdots +_{F}
  \imath(\alpha_{i_l}) ) t \big)^{(-1)^l},
\end{eqnarray}
where the second equality holds by the property (c) of characteristic
classes and the last equality follows from
Lemma~\ref{lem-comp-cc}.(iii).

Since $\imath(\alpha_l)=-\alpha_l + O(2)$, the $r$th characteristic
class $\ch_r$ modulo $\gamma^{r+1}\hh(X)$ is given by the coefficient
at $t^r$ of the  following polynomial
\[
  \prod_{l=1}^r \prod_{1 \leq i_1 < \cdots < i_l \leq r}
  \big( 1 - (\alpha_{i_1} +\ldots + \alpha_{i_l} ) t \big)^{(-1)^l}
\]
and the desired formula then follows by \cite[Lem.~15.3]{ful-inter}.
\end{proof}

\section{Characteristic classes and deformation maps}

In the present section we establish a connection between
$F$-exponents, characteristic classes and maps involving flag
varieties (see~\ref{geomthm}).


\medskip

Let $G$ be a split simple simply connected linear algebraic group of
rank $n$ over a field $k$.
Fix a split maximal torus $T$ and a Borel subgroup $B$ so that
$T \subset B \subset G$.
Let $\BB = G/B$ be the variety of Borel subgroups of $G$ and
let $\La$ be the group of characters of $T$. Let $W$ be the Weyl group
of the root system associated to $G$. It acts by linear automorphisms
on $\Lambda$ via reflections. Let $t$ be the torsion index of $G$. Recall that $t=1$ if $G$ is of type $A_n$ or $C_n$, and $t$ is a power of 2 if $G$ is of type $B_n$, $D_n$ or $G_2$.

Let $F$ be a \fgl  over $R$. Consider the formal group ring $\fgr$ and
the characteristic map defined in \cite[\S6]{ca&za&pe-tor}:
\[
  \cc_F \colon \fgr \too \hgr.
\]
If $F$ corresponds to an \oct $\hh$ from
\cite[Thm.13.12]{ca&za&pe-tor}, then $\hgr\simeq \hh(\BB)$ and the map
$\cc_F$ is given by $x_\la \mapsto \ch_1(L(\la))$, where $L(\la)$ is
the associated line bundle.

\begin{example}
Characteristic map for the Chow theory $\mathrm{CH}$,
i.e. corresponding to the additive \fgl, is given by
 \[
  \mathfrak{c}_a \colon \Z [[\La]]_{F_a} \too \mathrm{CH}(\BB),
\quad
  x_\la \mapsto \mathrm{c}_1 \left( L(\la) \right).
\]
Hence, restricting to $S^*(\La)$ via $\veps_a$
we recover the usual characteristic map  for Chow groups
\cite[\S1.5]{dem-des}.

Characteristic map for the Grothendieck $K_0$, i.e. corresponding to
the multiplicative \fgl, is given by
\[
\mathfrak{c}_m \colon\Z [[\La]]_{F_m} \too K_0(\BB),
\quad
x_\la \mapsto 1 -[L(\la)^\vee].
\]
Restricting to the integral group ring $\Z[\La]$ via $\veps_m$
we recover the usual characteristic map for $K_0$
\cite[\S1.6]{dem-des} which maps $e^\la$ to $[L(\la)]$.

Observe that the 
algebraic cobordism $\Omega$ defined over a field of characteristic zero
satisfies the properties of  \cite[Thm.13.12]{ca&za&pe-tor},
therefore, we have the characteristic map $\cc_U\colon \LL[[\La]]_u
\to \Omega(\BB)$ defined by $x_\la \mapsto \mathrm{c}^\Omega_1(L(\la))$.
\end{example}


\begin{theorem}\label{geomthm}
Let $\hh$ be an \oct as considered in
\cite[Thm.~13.12]{ca&za&pe-tor}. Assume that the coefficient ring $R$
of $\hh$ has characteristic 0. Let $F$ be the respective \fgl
over $R$.
Then there is a commutative diagram of $R$-modules
\[
  \xymatrix{
            \IM^{(d)}  \ar[r]^-{\cc_m^{(d)}}
             \ar[d]_{ (-1)^{d-1} (d-1)! \cdot \Phi^{F_m\to F}_{d}} &
            \gamma^{(d)} K_0(\BB) \ar[d]^{\ch_d}   \\
            \IF^{(d)} \ar[r]^-{\cc_F^{(d)} } &
            \gamma^{(d)}\hh(\BB)
           }
\]
where $\Phi^{F_m\to F}_{d}$ is the deformation map, $\ch_d$ is
the $d$-th characteristic class and
$\gamma^{(d)}\hh(X)=\gamma^d\hh(X)/\gamma^{d+1}\hh(X)$ is the
$d$-th subsequent quotient.

If $\frac{1}{t}\in R$, then we have $\ker \cc_F^{(d)} =  (\IF^W)^{(d)}$.
\end{theorem}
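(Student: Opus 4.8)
The plan is to prove the two statements of Theorem~\ref{geomthm} in turn, with the diagram commutativity serving as the main input to the second (kernel) assertion.

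\medskip

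\textbf{Commutativity of the diagram.} First I would unwind what the four maps do on the canonical generators. The module $\IM^{(d)}$ is $R$-linearly spanned by products $x_{\la_1}\cdots x_{\la_d}$, $\la_i\in\La$, and by the discussion preceding Lemma~\ref{prop-pro} (together with \cite[Lem.~4.2]{ca&za&pe-tor}) each such generator corresponds to $\la_1\cdots\la_d\in S^d_R(\La)$. Now chase one generator $x_{\la_1}\cdots x_{\la_d}$ around the square. Going right then down: $\cc_m$ sends $x_{\la_i}\mapsto 1-[L(\la_i)^\vee]$, so $\cc_m^{(d)}$ sends the product to $\prod_{l=1}^d\bigl(1-[L(\la_l)^\vee]\bigr)\in\gamma^{(d)}K_0(\BB)$; applying $\ch_d$ and reducing mod $\gamma^{d+1}$, Lemma~\ref{prop-pro} gives exactly $(-1)^{d-1}(d-1)!\cdot\prod_{l=1}^d\ch_1^{\hh}(L(\la_l))$. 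Going down then right: $\Phi^{F_m\to F}_d$ fixes each $x_{\la_i}$ (that is the basis-independent formula \eqref{eq-iso-slices}), so after the scalar it sends the generator to $(-1)^{d-1}(d-1)!\cdot x_{\la_1}\cdots x_{\la_d}\in\IF^{(d)}$, and $\cc_F^{(d)}$ then sends this to $(-1)^{d-1}(d-1)!\cdot\prod_{l=1}^d\ch_1^{\hh}(L(\la_l))$ in $\gamma^{(d)}\hh(\BB)$. The two outputs agree, so the square commutes. (One should remark that $\cc_m^{(d)}$ and $\cc_F^{(d)}$ do land in the stated $\gamma$-quotients and are compatible with the filtrations — this is immediate from the definition of $\cc_F$ via $x_\la\mapsto\ch_1(L(\la))$ and of the $\gamma$-filtration by products of first characteristic classes, and is essentially recorded in \cite[\S6]{ca&za&pe-tor}.)

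\medskip

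\textbf{The kernel computation.} Assume $\tfrac1t\in R$. The inclusion $(\IF^W)^{(d)}\subseteq\ker\cc_F^{(d)}$ is the ``easy'' direction: any $W$-invariant element of $\IF$ maps under $\cc_F$ into $\hgr^W$, and by the description of $\hgr$ as $\hh(\BB)$ together with the fact that $\cc_F$ kills the augmentation ideal of the invariants (the kernel of the characteristic map contains $\IF^W$ — this is the formal-group-ring incarnation of Demazure's result, cf.\ \cite[\S6]{ca&za&pe-tor}); one checks this is compatible with passing to the $d$-th graded piece. For the reverse inclusion $\ker\cc_F^{(d)}\subseteq(\IF^W)^{(d)}$ I would argue by transporting the known Chow-theoretic statement along the diagram. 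Over $R$ with $\tfrac1t\in R$, Demazure's description gives $\ker\cc_a^{(d)}=(\IA^W)^{(d)}$ for the additive case (equivalently the classical statement $\ker(S^*(\La)\to\CH(\BB))$ in degree $d$ is $(I_a^W)^{(d)}$ after inverting $t$, via $\veps_a$). Now compose the top row of the diagram with the analogous diagram for $F\to F_a$: since $(d-1)!$ and the torsion index are the only denominators that enter and $\tfrac1t\in R$ — here is where one must be careful, see below — the vertical deformation maps $\Phi^{F_m\to F}_d$ and $\Phi^{F_a\to F}_d$ are isomorphisms of $R$-modules that are moreover $W$-equivariant on the graded pieces (they fix the generators $x_{\la_i}$, and $W$ permutes these), hence they carry $(\IA^W)^{(d)}$ isomorphically onto $(\IF^W)^{(d)}$. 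Chasing $\ker\cc_F^{(d)}$ backwards through $\Phi^{F_a\to F}_d$ into $\ker\cc_a^{(d)}=(\IA^W)^{(d)}$ and forward again gives the claim.

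\medskip

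\textbf{Where the difficulty lies.} The honest obstacle is the role of the torsion index $t$ and the factor $(d-1)!$. Commutativity of the square is pure bookkeeping once Lemma~\ref{prop-pro} is in hand, but it involves the scalar $(-1)^{d-1}(d-1)!$, which is \emph{not} invertible in $R$ in general; so the square alone does not let one move kernels back and forth — one cannot simply divide by $(d-1)!$. The point of the hypothesis $\tfrac1t\in R$ is that the surjectivity of the characteristic map $\cc_F$ onto $\hh(\BB)$ (equivalently the fact that $\hh(\BB)$ is generated by Chern classes of line bundles after inverting $t$, by the torsion-index theorem) forces $\ch_d$ on $\gamma^{(d)}K_0(\BB)$ to be surjective onto $\gamma^{(d)}\hh(\BB)$, so the cokernels match; combined with the fact that $\Phi^{F_m\to F}_d$ is an isomorphism \emph{before} multiplying by $(d-1)!$, a diagram chase identifies $\ker\cc_F^{(d)}$ with $\Phi^{F_m\to F}_d\bigl(\ker\cc_m^{(d)}\bigr)$. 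So the real work is (i) verifying that inverting $t$ makes $\cc_F^{(d)}$ surjective with the right source, i.e.\ invoking the torsion-index computations recalled at the start of the section, and (ii) checking that Demazure's kernel description $\ker\cc_m^{(d)}=(\IM^W)^{(d)}$ holds with $\tfrac1t\in R$ and is $W$-equivariantly transported by $\Phi^{F_m\to F}_d$ — the $W$-equivariance on graded pieces being exactly the content of the basis-independence of $\Phi^{F_m\to F}_d$ noted after \eqref{eq-iso-slices}.
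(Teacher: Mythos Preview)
Your commutativity argument is correct and matches the paper's (which simply cites Lemma~\ref{prop-pro}). The gap is in the kernel assertion. You claim that the $W$-equivariance of $\Phi^{F_a\to F}_d$ on the graded piece forces it to carry $(\IA^W)^{(d)}$ isomorphically onto $(\IF^W)^{(d)}$, but this confuses $(\IF^W)^{(d)}$ with $(\IF^{(d)})^W$: the former is the graded piece of the \emph{ideal generated by} $W$-invariants, not the $W$-fixed part of the graded piece. The entire purpose of the exponent $\tau_d^{F\to F'}$ (Definition~\ref{defFexp}) is precisely to measure the failure of $\Phi_d$ to identify $(\IF^W)^{(d)}$ with $(\IFp^W)^{(d)}$; Theorem~\ref{ABD} shows $\tau_d$ can be a nontrivial $2$-power over~$\Z$, and the cases where $\tau_d=1$ under $\tfrac1t\in R$ are established only for specific Dynkin types and \emph{after} the present theorem --- so invoking that here is circular. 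Your fallback diagram chase through the $(d-1)!$-scaled square does not rescue this either: from $\cc_F^{(d)}(x)=0$ one only obtains $\cc_m^{(d)}\bigl((\Phi_d)^{-1}x\bigr)\in\ker\ch_d$, and $\ker\ch_d$ is uncontrolled.

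The paper's proof of the kernel statement does not use the commutative square at all. It invokes \cite[Thm.~6.9]{ca&za&pe-tor} directly: for $\tfrac1t\in R$ and any $F$ as in the hypothesis, one has the ungraded short exact sequence $0\to\IF^W\to\fgr\to\hh(\BB)\to 0$, and passing to the associated graded of the $\IF$-adic filtration gives $\ker\cc_F^{(d)}=(\IF^W)^{(d)}$ immediately. Note that this is exactly the reference you already cite for the ``easy'' inclusion --- it contains the full statement, for arbitrary $F$, so no transport along deformation maps is needed.
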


\begin{proof}
The commutativity of the diagram follows from Lemma~\ref{prop-pro}.
Now, suppose that $\frac{1}{t}\in R$, by \cite[Thm.~6.9]{ca&za&pe-tor} there is an exact
sequence of $R$-modules
\[
\xymatrix{
            0 \ar[r] &
          (\IF^W)  \ar[r] &
          (\fgr) \ar[r]^{\cc_F} &
            \hh(\BB) \ar[r] &
            0
 }.
\]
Passing to the subsequent quotients of the $\IF$-adic filtration we
obtain the exact sequence
\[
\xymatrix{
            0 \ar[r] &
          (\IF^W)^{(d)} \ar[r] &
          \IF^{(d)} \ar[r]^-{\cc_F^{(d)}} &
            \gamma^{(d)}\hh(\BB) \ar[r] &
            0
 }.\qedhere
\]
\end{proof}

\begin{corollary} For every $d\ge 0$ there is a group isomorphism
\[
  \mathrm{CH}^d(\BB) \otimes R_\qq
\simeq
  \gamma^{(d)}\hh(\BB)\otimes \qq.
\]
\end{corollary}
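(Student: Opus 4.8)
The plan is to chain together the isomorphisms already established in the excerpt. The key observation is that the corollary wants to compare $\mathrm{CH}^d(\BB)$ (rationally) with $\gamma^{(d)}\hh(\BB)$ (rationally), and both sides have been identified, after tensoring with $\qq$, with intrinsic lattice data. First I would apply Theorem~\ref{geomthm} in the special case $\hh = \mathrm{CH}$, $F = F_a$: since $\frac1t\in\qq$ automatically, we get $\ker\cc_a^{(d)} = (\IA^W)^{(d)}$, hence an isomorphism of $\qq$-modules (after $\otimes\qq$)
\[
\IA^{(d)}_\qq / (\IA^W)^{(d)}_\qq \;\iso\too\; \gamma^{(d)}\mathrm{CH}(\BB)\otimes\qq.
\]
But $\gamma^{(d)}\mathrm{CH}(\BB)$ is just $\mathrm{CH}^d(\BB)$ (the $\gamma$-filtration on the Chow ring coincides with the grading by codimension up to the usual identification, and in any case $\cc_a$ surjects onto $\mathrm{CH}(\BB)$ with the stated graded pieces), so the left side identifies $\mathrm{CH}^d(\BB)\otimes\qq$ with $\IA^{(d)}_\qq/(\IA^W)^{(d)}_\qq \cong S^d_\qq(\La)/(I_a^W)^{(d)}_\qq$.

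Next I would run the same Theorem~\ref{geomthm} for the given $\hh$ with its characteristic-zero coefficient ring $R$, so that $\frac1t\in R_\qq$ and the exact sequence in the proof of Theorem~\ref{geomthm} yields, after $\otimes_R R_\qq$,
\[
\IF^{(d)}_{R_\qq}/(\IF^W)^{(d)}_{R_\qq} \;\iso\too\; \gamma^{(d)}\hh(\BB)\otimes\qq.
\]
Now I would invoke Theorem~\ref{thmexs} and the discussion preceding it: the logarithm $\log_F$ gives, rationally, a $W$-equivariant isomorphism $\log_F^\star\colon (\IA^W)^{(d)}_{R_\qq}\iso\too(\IF^W)^{(d)}_{R_\qq}$ which is the restriction of $\Phi^{F_a\to F}_d$, and likewise $\Phi^{F_a\to F}_d\colon \IA^{(d)}_{R_\qq}\iso\too\IF^{(d)}_{R_\qq}$ on the ambient modules. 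Hence $\Phi^{F_a\to F}_d$ descends to an isomorphism of the quotients
\[
\IA^{(d)}_{R_\qq}/(\IA^W)^{(d)}_{R_\qq}\;\iso\too\;\IF^{(d)}_{R_\qq}/(\IF^W)^{(d)}_{R_\qq}.
\]
Combining this with the two identifications above, and using $\IA^{(d)}_{R_\qq}/(\IA^W)^{(d)}_{R_\qq}\cong \big(S^d_\qq(\La)/(I_a^W)^{(d)}_\qq\big)\otimes_\qq R_\qq\cong \big(\mathrm{CH}^d(\BB)\otimes\qq\big)\otimes_\qq R_\qq = \mathrm{CH}^d(\BB)\otimes R_\qq$, I get the desired isomorphism $\mathrm{CH}^d(\BB)\otimes R_\qq\simeq\gamma^{(d)}\hh(\BB)\otimes\qq$.

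The main obstacle, and the only point requiring care, is bookkeeping of the various tensor products: one must check that $\gamma^{(d)}\hh(\BB)\otimes\qq$ is the same as $\gamma^{(d)}\hh(\BB)\otimes_R R_\qq$ (true since $R$ has characteristic $0$, so $R_\qq = R\otimes_\Z\qq$ and tensoring an $R$-module by $\qq$ over $\Z$ agrees with tensoring by $R_\qq$ over $R$), and that the Chow-side computation really produces $S^d(\La)$ with its integral structure so that $\otimes R_\qq$ can be pulled out. There is also the routine point that the $\gamma$-filtration quotient $\gamma^{(d)}\mathrm{CH}(\BB)$ equals $\mathrm{CH}^d(\BB)$ for the full flag variety $\BB$; this follows because $\mathrm{CH}(\BB)$ is generated by Chern classes of line bundles (the characteristic map $\cc_a$ is surjective), so the $\gamma$-filtration is the codimension filtration. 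Everything else is a formal concatenation of isomorphisms already in hand, so I expect the proof to be short.
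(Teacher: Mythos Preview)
Your proposal is correct and follows essentially the same route as the paper: the paper's proof is a one-liner invoking the isomorphism $\log_F^\star\colon (\IA^W)^{(d)}_{R_\qq}\simeq(\IF^W)^{(d)}_\qq$ on the kernels of the characteristic maps, and you have simply unpacked this, together with the bookkeeping identifying $\gamma^{(d)}\mathrm{CH}(\BB)$ with $\mathrm{CH}^d(\BB)$ and sorting out the tensor products over $R$ versus over $\Z$.
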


\begin{proof}
Follows from the isomorphism $\log_F^\star\colon (\IA^W)^{(d)}_{R_\qq}
\simeq (\IF^W)^{(d)}_\qq$ on the kernels, where $F$ is the respective
\fgl
\end{proof}

\section{Exponents of root systems. Applications.}\label{sec:t2}

In this section we describe some possible generators of $\IF^W$
and the exponents $\tau_d^{F\to F'}$ of the action of the Weyl group $W$
on the weight lattice $\La$ for root systems of types
$A_n$ ($n\ge 1$), $C_n$ ($n\ge 2$), $B_n$ ($n\ge 3$), $D_n$ ($n\ge 4$) and $G_2$.
In the present section $R$ is always a ring of characteristic different from $2$.
Recall that the torsion index $t$ of $\mathfrak{D}$ is  1 for types $A_n$ and $C_n$ and a power of 2 for types $B_n$, $D_n$ and $G_2$.

\begin{definition}
Given a root system of classical Dynkin type $\mathfrak{D}$ of rank $n$
we define the $W$-invariant elements
$\Th_d^\mathfrak{D} \in  \fgr^W\cap \IF$ as follows.

\smallskip
For the type $A_n$ we set
$\Th^A_d$ to be the $(d+1)$-th elementary symmetric polynomial in $\{x_{e_j}\}$, where $\{e_j\}_{j=1}^{n+1}$
is the standard basis of $\bbR^{n+1}$ on which the
Weyl group $W$ acts by permutations \cite[p.5]{hum}. 

\smallskip
For the type $B_n$ or $C_n$ we set
$\Th^B_d$ to be the $d$-th elementary symmetric polynomial in $\{x_{e_j}x_{-e_j}\}$, 
where $\{e_j\}_{j=1}^n$ is the standard basis of $\bbR^n$ on which the Weyl
group acts by permutations and sign changes \cite[p.5]{hum}.

\smallskip
For the type $D_n$ we set
$\Th_d^D:=\Th_d^B$ for $d=1\ldots n-1$ and
$\Th^D_n:=\prod_{i=1}^n(x_{e_i}-x_{-e_i})$, where $\{e_j\}_{j=1}^n$ is the standard basis of $\bbR^{n}$ on which  the Weyl group acts
by permutations and sign changes which
involve an even number of signs \cite[p.5]{hum}.

\smallskip
We will simply write  $\Th_d$, if the Dynkin type is clear for the
context.
\end{definition}



If $f\in \IF^d\setminus \IF^{d+1}$, we say that $\deg f=d$.
For instance, reducing to the additive case via the isomorphism
\eqref{eq-iso-slices} we obtain that
$\deg \Theta_d^A=d+1$ for $d=1\ldots n$,
$\deg \Theta_d^B=\deg\Th_d^C=2d$ for $d=1\ldots n$,
$\deg \Theta_d^D=2d$ for $d=1\ldots n-1$ and
$\deg\Theta_n^D=n$. Moreover, if $F=F_a$, the g.c.d. of the coefficients of $\Th^B_i$ and $\Th_i^D$ in $S^*(\La)$ is 1 if $2\le i\le n-1$, and is $2$ if $i=1$, and that of $\Th_n^B$ (resp. $\Th_n^D$) is 1 (resp. $2^n$).

\smallskip

 Let $(\fgr^W)^{(d)}=(\fgr^W\cap \IF^d)/(\fgr^W\cap
\IF^{d+1})$ denote the $d$-th homogeneous component of $\fgr^W$.
Given a $n$-tuple of non-negative integers $\al=(\al_1,...,\al_n)$ we
denote $\Th(\al):=\prod_{i=1}^n\Th_i^{\al_i}$,
$|\al|:=\sum_{i=1}^n\al_i \deg \Th_i $, where $\deg \Th_i$ is
the degree of $\Th_i$. Observe that $\deg \Th(\al)=|\al|$.
Let $\langle\Th(\al)\rangle_{|\al|=d}$ denote the $R$-linear span of
all $\Th(\al)$ such that $|\al|=d$.

\begin{lemma} \label{lem:Fainv} For classical Dynkin types, we have 
$$S^*(\Lambda)^W\otimes \Z[1/t]=\Z[1/t][\Th_1,...,\Th_n].$$
The set $\{\Th_1,...,\Th_n\}$ is called a set of basic invariants.
\end{lemma}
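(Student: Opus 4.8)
The plan is to reduce everything to the additive case, where the statement is the classical description of $W$-invariant polynomials, and then transport that description across the deformation isomorphism. First I would recall that $S^*(\Lambda)^W\otimes\qq$ is a polynomial ring on basic invariants of the prescribed degrees — this is the Chevalley/Shephard–Todd theorem — and that these degrees are exactly $\{2,3,\dots,n+1\}$ for $A_n$, $\{2,4,\dots,2n\}$ for $C_n$, and so on; the elements $\Theta^{\mathfrak D}_d$ written above are visibly $W$-invariant and, reducing via the degree computation stated just before the lemma (using the slice isomorphism \eqref{eq-iso-slices} to identify $\IF^{(d)}$ with $S^d_R(\Lambda)$), have precisely the right degrees. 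So it suffices to prove the integrality statement after inverting $t$: that $S^*(\Lambda)^W\otimes\Z[1/t] = \Z[1/t][\Theta_1,\dots,\Theta_n]$, i.e. that the $\Theta_d$ already generate over $\Z[1/t]$, not merely over $\qq$.

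The main step is a type-by-type verification. For $A_n$ ($t=1$): here $W=S_{n+1}$ acts by permuting $\{e_j\}$, but the $e_j$ are not a basis of $\Lambda$ — they satisfy $\sum e_j$ lying in (or adjacent to) the lattice. Still, $S^*(\Lambda)^W$ over $\Z$ is spanned by the elementary symmetric functions in the $x_{e_j}$ subject to the relation from $\sum e_j$; a direct argument (or citing that $\Z[\Lambda]^W$ is a polynomial ring on fundamental representations via Chevalley, and transporting through $\phi_d$) shows the $\Theta^A_d$ generate integrally. For $C_n$ (again $t=1$): the weight lattice has basis the $e_j$, $W$ acts by signed permutations, and $S^*(\Lambda)^W = \Z[\Theta^C_1,\dots,\Theta^C_n]$ on the nose, since the elementary symmetric functions in the $x_{e_j}x_{-e_j}$ (with $x_{-e_j} = -_F x_{e_j} = -x_{e_j}+O(2)$ in the additive model these are $-e_j^2$) generate the ring of signed-permutation invariants over $\Z$ — this is a standard fact for the hyperoctahedral group. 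For $B_n$: now the lattice sits between the root and coweight lattices and the index introduces the factor $2$; inverting $2$ (hence $t$, a power of $2$), the $\Theta^B_d$ generate — this uses the g.c.d. computation quoted before the lemma, that the $\Theta^B_i$ have content $1$ for $2\le i\le n$ and content $2$ for $i=1$, so after inverting $2$ nothing is lost. For $D_n$: similarly, $\Theta^D_n = \prod(x_{e_i}-x_{-e_i})$ (content $2^n$ in the additive model, where it is essentially $2^n\prod e_i$) replaces the last elementary symmetric function, and the half-sum subtlety of the $D_n$ weight lattice is exactly what forces inverting $2$; again the content computations ensure $\Z[1/t][\Theta_1,\dots,\Theta_n]$ is all of $S^*(\Lambda)^W\otimes\Z[1/t]$. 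For $G_2$ one checks the two basic invariants of degrees $2$ and $6$ directly (though $G_2$ is not "classical" and may be excluded from the lemma's scope).

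I would organize this as: (1) identify the degrees and reduce to the integral-after-inverting-$t$ claim; (2) handle $A_n$ and $C_n$ over $\Z$ using the explicit symmetric-function descriptions, citing \cite{ba&ne&za-ch} where the corresponding statement for $\Z[\Lambda]^W$ is available and transporting via $\phi_d$; (3) handle $B_n$ and $D_n$ by the same symmetric-function computation together with the content/g.c.d. remark preceding the lemma, so that inverting $2=\mathrm{ord}_2(t)$ suffices. The main obstacle is the lattice bookkeeping in types $B_n$ and $D_n$: the weight lattice is strictly larger than the $\Z$-span of the $e_j$, so one must check that the "extra" invariants coming from half-integer combinations of the $e_j$ already lie in $\Z[1/t][\Theta_1,\dots,\Theta_n]$; this is precisely where the power-of-$2$ torsion index enters, and it is the step where one cannot avoid a short explicit computation with the fundamental weights. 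Everything else is either classical invariant theory or a formal consequence of the degree bookkeeping already set up in the preceding paragraphs.
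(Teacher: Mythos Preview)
Your overall strategy --- type-by-type verification reducing to classical symmetric-function theory --- is the same as the paper's, and your treatment of $A_n$ and $C_n$ is essentially correct and matches the paper. But there is a genuine gap in your handling of $B_n$ and especially $D_n$.

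The problem is your repeated appeal to the ``content/g.c.d.\ computations'' stated before the lemma. Knowing that each individual $\Theta_i$ has content a power of $2$ does \emph{not} imply that $\Z[1/2][\Theta_1,\dots,\Theta_n]$ exhausts the invariant ring. Content measures primitivity of a single polynomial, not whether a family generates; one could easily have primitive algebraically independent invariants of the correct degrees that fail to generate integrally (think of replacing one generator $h_i$ by $h_i + p\cdot(\text{something})$). So ``after inverting $2$ nothing is lost'' is not justified by what you wrote. For $B_n$ the real reason, which the paper states explicitly, is the lattice identity $\Z[1/2][e_1,\dots,e_n]=\Z[1/2][\omega_1,\dots,\omega_n]$: once you invert $2$ the weight lattice and the $\Z$-span of the $e_j$ agree, so you are literally back in the $C_n$ situation and the hyperoctahedral result applies verbatim. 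You gesture at this (``the lattice sits between\dots''), but then invoke content instead of the lattice identity, and that substitution is where the argument breaks.

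For $D_n$ the gap is more serious, and the paper's argument is genuinely different from anything you sketch. The paper first invokes Demazure's theorem \cite{dem-inv} that $S^*(\Lambda)^W\otimes\Z[1/t]$ is \emph{already known} to be a polynomial ring $\Z[1/2][h_1,\dots,h_n]$ on some generators; the task is then only to show that the change-of-generators from $\{h_i\}$ to $\{\Theta_i\}$ is invertible over $\Z[1/2]$. This is done by a Jacobian computation: using the power sums $q_{2i}$ as an intermediate rational basis and Newton's identities, the paper computes $|\partial\Theta_i/\partial e_j|$ explicitly, observes that $|\partial\Theta_i/\partial h_j|$ is a rational number dividing it in $\Z[1/2][h_1,\dots,h_n]$, and concludes it is a unit in $\Z[1/2]$. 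Your proposal contains no analogue of this step. (An alternative direct route for $D_n$, which you also do not give, would be to use the decomposition of any $D_n$-invariant as $f + g\cdot e_1\cdots e_n$ with $f,g$ fully $B_n$-invariant; but either way, something beyond content is required.)
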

\begin{proof}By \cite[\S6]{dem-inv}, we know that the left hand side
  is a polynomial ring in $n$ generators $\{h_1,...,h_n\}$. 
The degrees of $h_i$ are uniquely determined by the Dynkin type.

If $\mathfrak{D}$ is of type $A_n$, then by \cite[Ch.I (2.4)]{Mac}, the conclusion follows.

If $\mathfrak{D}$ is of type $C_n$, since the Weyl group acts on $\{{e_i}\}$ by sign changes and permutations, so each $W$-invariant element belongs to $\Z[e_1^2,...,e_n^2]$ and is symmetric under permutations, so similar as the type $A_n$ case, we get the conclusion. 

The proof for the type $B_n$ is similar to that of the $C_n$ after inverting 2, noticing that $\Z[1/2][e_1,...,e_n]=\Z[1/2][\om_1,...,\om_n]$.

Now consider the type $D_n$. Let $q_{2i}:=\sum x_{e_j}^{2i}$ be the power sum of degree $2i$, and denote $\mathcal{Q}=\{q_2,...,q_{2n-2}, p_n\}$, $\mathcal{T}=\{\Th_1,...,\Th_{n-1},p_n\}$, $\mathcal{H}=\{h_1,...,h_n\}$ and $\mathcal{E}=\{e_1,...,e_n\}$. Denote by $|\partial A/\partial B|$ the Jacobian of $A$ with respect to $B$. By \cite[3.12]{hum}, $\mathcal{Q}$ is a set of basic invariants over $\mathbb{Q}$, and $$|\partial\mathcal{Q}/\partial\mathcal{E}|=(-2)^{n-1}(n-1)!\prod_{i<j}(e_i^2-e_j^2).$$ From the Newton's identities we see that $|\partial \mathcal{Q}/\partial \mathcal{T}|=(-1)^{n-1}(n-1)!$, so  $|\partial\mathcal{T}/\partial\mathcal{E}|=2^n\prod_{i<j}(e_i^2-e_j^2)$, and $\mathcal{T}$ is a set of basic invariants over $\mathbb{Q}$. Hence, $|\partial \mathcal{T}/\partial \mathcal{H}|$ is a rational number. Moreover, it divides $|\partial\mathcal{T}/\partial\mathcal{E}|$ in $\Z[1/2][h_1,...,h_n]$. So $|\partial \mathcal{T}/\partial \mathcal{H}|$ is a power of 2. Therefore, $\mathcal{T}$ is a set of basic invariants over $\Z[1/2]$. 
\end{proof}

\begin{lemma}\label{lem:invind} Let $\mathfrak{D}$ be of type $B_n$ (resp. type $D_n$), and $d\ge 1$ (resp. $n>d\ge 1$). Let $F=F_a$, and view $\Th_i$ as in $\Z[e_1,...,e_n]\subsetneq \Z[[\La]]_{F_a}$, then  the set $\{\Th(\al)\}_{|\al|=d}$ is linearly independent modulo any positive integer $M$.
\end{lemma}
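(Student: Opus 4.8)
The plan is to work in the polynomial ring $\Z[e_1,\dots,e_n]$ and observe that the elements $\Th_i$ ($1\le i\le n-1$ for $D_n$, $1\le i\le n$ for $B_n$) are the elementary symmetric polynomials in the $n$ variables $y_j:=x_{e_j}x_{-e_j}$, which in the additive case become $y_j=-e_j^2$ (up to sign; in any case $y_j$ and $-e_j^2$ agree up to a unit, and the precise sign is irrelevant for linear independence). So the monomials $\Th(\al)=\prod_i\Th_i^{\al_i}$ are precisely the monomials obtained by evaluating the basis of monomial-symmetric-type combinations; more efficiently, $\Z[\Th_1,\dots,\Th_{n}]$ (resp. $\Z[\Th_1,\dots,\Th_{n-1}]$ together with its top piece) is a polynomial subring, so there are no relations among the $\Th_i$ over $\Z$, hence none over $\Z/M$ either — provided the $\Th_i$ themselves reduce to algebraically independent elements mod $M$.

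The key point I would isolate is therefore: \emph{the elementary symmetric polynomials $e_1(y),\dots,e_n(y)$ in $n$ independent variables $y_1,\dots,y_n$ remain algebraically independent over any commutative ring, in particular over $\Z/M$.} This is standard — the extension $\Z/M[e_1(y),\dots,e_n(y)]\subseteq \Z/M[y_1,\dots,y_n]$ is faithfully flat (free of rank $n!$), and the $e_i(y)$ are a regular sequence / the ring $\Z/M[y]$ is free as a module over the symmetric subring, so any polynomial relation among the $e_i(y)$ with coefficients in $\Z/M$ would pull back to one over $\Z$, which is impossible. Concretely: if $\sum_{|\al|=d} c_\al \Th(\al) \equiv 0 \pmod M$ with $c_\al\in\Z$, then viewing this as a polynomial identity in $\Z/M[e_1,\dots,e_n]$ symmetric in the $y_j=-e_j^2$, algebraic independence of the $\Th_i$ mod $M$ forces all $c_\al\equiv 0\pmod M$, which is exactly the asserted linear independence. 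For the $D_n$ case the restriction $d<n$ guarantees only $\Th_1,\dots,\Th_{n-1}$ occur (the exceptional generator $\Th_n^D$ has degree $n$, hence can appear in $\Th(\al)$ only when $|\al|\ge n$), so we never need to worry about $p_n$ or $\Th_n^D$ and the argument is identical to type $B_n$ restricted to the first $n-1$ symmetric functions, which are still algebraically independent.

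So the steps, in order, are: (1) reduce via the additive isomorphism to the statement about $\Z[e_1,\dots,e_n]$, and note $\Th_i = e_i(y_1,\dots,y_n)$ with $y_j = \pm e_j^2$; (2) observe that for $B_n$ (all $i$) or $D_n$ with the degree restriction $d<n$ (only $i\le n-1$), the monomials $\Th(\al)$ with $|\al|=d$ are in bijection with monomials $\prod y_j^{\beta_j}$ of total degree $d$ in the $y_j$ via the classical fact that $\Z[e_1(y),\dots,e_m(y)]$ is a polynomial ring for $m\le n$; (3) invoke that a polynomial ring over $\Z$ in the $\Th_i$ base-changes to a polynomial ring over $\Z/M$ in the (images of the) $\Th_i$, equivalently that distinct monomials in algebraically independent elements stay $\Z/M$-linearly independent; (4) conclude. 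The main obstacle — really the only nontrivial input — is step (3), i.e. checking that algebraic independence of the $\Th_i$ survives base change to $\Z/M$; I would handle this by the faithful-flatness (freeness) of $\Z[y_1,\dots,y_n]$ over its ring of symmetric polynomials, which commutes with $-\otimes_\Z\Z/M$, so that a mod-$M$ relation lifts to a contradiction over $\Z$. Everything else is bookkeeping about degrees and the bijection of monomials.
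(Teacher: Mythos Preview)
Your proof is correct and follows the same line as the paper: both reduce to showing that the elementary symmetric polynomials in the $y_j=x_{e_j}x_{-e_j}=-e_j^2$ remain algebraically independent over $\Z/M$, and both handle type $D_n$ by observing that the restriction $d<n$ forces only $\Th_1,\dots,\Th_{n-1}$ to appear. The one difference is in how algebraic independence over $\Z/M$ is justified: the paper invokes the Jacobian criterion (the Jacobian of the elementary symmetric polynomials is the Vandermonde $\prod_{i<j}(e_i-e_j)$, nonzero modulo any $M$), whereas you use that $\Z[y_1,\dots,y_n]$ is free over its ring of symmetric polynomials and that this freeness survives base change to $\Z/M$; both are valid, and your argument is arguably more self-contained since the Jacobian criterion over a non-domain like $\Z/M$ tacitly requires a reduction to residue fields. (One small slip: in your step (2) the $\Th(\al)$ are not in bijection with \emph{all} monomials in the $y_j$, only with a basis of the symmetric ones, but you do not actually use that bijection---what you use, correctly, is that $\Z[\Th_1,\dots,\Th_m]$ is a polynomial ring.)
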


\begin{proof} Firstly, let $s_1,...,s_n$ be elementary symmetric polynomials in $\Z[e_1,...,e_n]$. Then its Jacobian is $\prod_{i<j}(e_i-e_j)$, so $s_1,...,s_n$ are algebraically independent over $\Z/M\Z$. In particular, it shows that the set $\{f=s_1^i\cdots s_n^{i_n}|\deg f=d\}$ is linearly independent over $\Z/M\Z$. 

If $\mathfrak{D}$ is of type $B_n$, we identify the set $\{\Th_i\}_{i=1}^n\subset \Z[e_1,...,e_n]$ as the set of elementary symmetric polynomials in $\{e_j^2\}_{j=1}^n$, so  the conclusion follows.

For the type $D_n$, since $d<n$, the elements involved are
$\Th_1,...,\Th_{n-1}$, so the conclusion follows by the same arguments. 
\end{proof}
\begin{remark} In Lemma \ref{lem:invind} the set $\{\Th(\al)\}_{|\al|=d}$ may not be linearly independent in $\Z[\om_1,...,\om_n]$ modulo  certain $M$. For example, $\Th_1/2\in \Z[\om_1,...,\om_n]$ but $\Th_1/2\not\in \Z[e_1,...,e_n]$, so $\Th_1$ is linearly dependent in  $\Z/2[\om_1,...,\om_n]$ and linearly independent in $\Z/2[e_1,...,e_n]$.
\end{remark}

\begin{lemma} \label{lem:prinv}Suppose $\mathfrak{D}$ is of type $B_n$ (resp. $D_n$), and let $d\ge 2$ (resp. $d>n \ge 2$). Then 
\begin{itemize}
\item[(1)] $2^d\cdot (R[[\La]]_F^W)^{(d)}\subset \langle \Th(\al)\rangle_{|\al|=d}$,
\item[(2)] $
2^d\cdot (\IF^W)^{(d)}\subseteq
\big\{\sum_{\deg \Th_i\le d} g_i\Th_i\mid g_i\in \IF^{(d-\deg \Th_i)}\big\}$.
\end{itemize}
If $1/2\in R$, then the ``$\subset$'' in (1) and (2) can be replaced by ``$=$'' after removing $2^d$.
\end{lemma}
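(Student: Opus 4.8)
The plan is to prove statement (1) first and then deduce (2), treating the cases $B_n$ and $D_n$ uniformly by working in the additive model via the isomorphism \eqref{eq-iso-slices}, after which the case of general $F$ follows by applying $\Phi^{F_a\to F}_d$ (which is an $R$-module isomorphism commuting with the $W$-action on homogeneous components). So reduce to $F=F_a$, where $(R[[\La]]_{F_a}^W)^{(d)}$ is identified with the degree-$d$ part of $S^*(\La)^W\otimes R$ and the $\Th_i$ become the classical symmetric functions described in the previous lemmas.

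First I would establish (1). By Lemma~\ref{lem:Fainv}, after inverting $2$ (hence $t$, since $t$ is a power of $2$ in types $B_n$, $D_n$) we have $S^*(\La)^W\otimes\Z[1/2]=\Z[1/2][\Th_1,\dots,\Th_n]$; thus every degree-$d$ invariant is a $\Z[1/2]$-linear combination of the monomials $\Th(\al)$ with $|\al|=d$ (note that for type $D_n$ with $d>n$ the invariant $\Th_n^D$ of degree $n$ could a priori appear, but since $(\Th_n^D)^2$ is a polynomial in $\Th_1^D,\dots,\Th_{n-1}^D$ one checks $\Th_n^D$ enters only to the first power with the remaining factor a polynomial in $\Th_1,\dots,\Th_{n-1}$ — however, to keep the statement clean one uses that a $W$-invariant of degree $d$ not divisible by parity constraints lies in $\langle\Th(\al)\rangle$; this is the point requiring a little care). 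The denominators appearing are powers of $2$; the task is to bound them. Here I would argue that clearing denominators in the expression of a degree-$d$ invariant in terms of the $\Th_i$ costs at most a factor $2^d$: each generator $\Th_i$ in the $e_j$-variables has content bounded as recorded after Lemma~\ref{lem:Fainv} (content $1$ for $2\le i\le n-1$ and $i=n$ in type $B$, content $2$ for $i=1$ and for $i=n$ in type $D$), so the Jacobian/transition matrix between $\{\Th(\al)\}$ and any integral basis of the degree-$d$ invariants has determinant a power of $2$ of exponent at most $d$, by a degree count (each of the $d$ "slots" contributes at most one factor of $2$). This gives $2^d\cdot(S^*(\La)^W)^{(d)}\subseteq\langle\Th(\al)\rangle_{|\al|=d}$, which is (1) in the additive model. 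When $1/2\in R$ the factor $2^d$ is a unit and $\langle\Th(\al)\rangle_{|\al|=d}$ already spans, giving equality.

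Next, (2) follows from (1) by the standard "homogeneous components of an ideal" argument. Take $f\in(\IF^W)^{(d)}$, lift it to $\tilde f\in\IF^W\cap\IF^d$. Since $\IF^W$ is generated by $\fgr^W\cap\IF$, write $\tilde f=\sum_k g_k h_k$ with $h_k\in\fgr^W\cap\IF$ and $g_k\in\fgr$; projecting onto degree $d$ and using (1) on each homogeneous invariant piece of the $h_k$'s, we may replace the $h_k$'s by the $\Th_i$'s at the cost of a factor $2^d$, landing $2^d\tilde f$ in the submodule $\sum_{\deg\Th_i\le d}g_i\Th_i$ with $g_i$ of the complementary degree; reducing modulo $\IF^{d+1}$ gives exactly the asserted inclusion with $g_i\in\IF^{(d-\deg\Th_i)}$. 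For the equality when $1/2\in R$, the reverse inclusion $\{\sum g_i\Th_i\}\subseteq(\IF^W)^{(d)}$ is immediate since each $\Th_i\in\fgr^W\cap\IF$, and (1) with equality supplies the rest.

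The main obstacle I anticipate is the bookkeeping on the power of $2$ in step one — precisely, showing that expressing a degree-$d$ invariant (integrally, in the $e_j$-coordinates) as a polynomial in the $\Th_i$ introduces denominators no worse than $2^d$. The clean way is to observe that in the $e_j$-variables the $\Th_i$ for $B_n$ are literally the elementary symmetric functions in the $e_j^2$ (Lemma~\ref{lem:invind}), so there is no denominator at all for those; the only $2$'s come from passing between the sublattice $\bigoplus\Z e_j$ and the full weight lattice $\La$, and from the exceptional generators $\Th_1$ (type $B$, $D$) and $\Th_n^D$. Tracking the index $[\La:\bigoplus\Z e_j]$ together with these contents, each of degree at most $d$, yields the bound $2^d$; I would make this a short explicit content computation rather than invoking a general determinant estimate, since the relevant generating functions are completely explicit. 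Everything else is formal manipulation of graded ideals.
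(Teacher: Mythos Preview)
Your overall strategy matches the paper's: reduce to $F=F_a$ via $\Phi^{F\to F_a}_d$, pass from the $\omega_i$-basis to the $e_j$-basis at the cost of the factor $2^d$, use that the $\Th_i$ generate the invariants integrally in the $e_j$-coordinates, and deduce (2) from (1) by writing elements of $\IF^W$ as sums $\sum g_k h_k$ with $h_k\in\fgr^W\cap\IF$.

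Two issues. First, the hypothesis for type $D_n$ in the statement is a typo: it should read $n>d\ge 2$, consistent with Lemma~\ref{lem:invind}, Theorem~\ref{ABD}, the Remark following the present lemma, and the opening sentence of the paper's own proof (``$d<n$''). Under the correct hypothesis $\Th_n^D$ never appears and your parenthetical worry evaporates. With the typo'd hypothesis $d>n$, your proposed fix is wrong: in the additive model $\Th_n^D=2^n e_1\cdots e_n$, so $(\Th_n^D)^2=2^{2n}\prod_j e_j^2$ is $2^{2n}$ times the $n$-th elementary symmetric function in the $e_j^2$, which is algebraically independent of $\Th_1^D,\dots,\Th_{n-1}^D$.

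Second, your ``Jacobian/transition matrix determinant $\le 2^d$ by a degree count'' is not a proof as it stands; the clean argument you sketch at the end is the right one, and it is exactly what the paper does. Concretely: for $f\in S^d_R(\La)^W$, writing the $\omega_i$ in terms of the $e_j$ gives $2^d f=\tilde f\in R[e_1,\dots,e_n]^W$; by Lemma~\ref{lem:Fainv} one has $\tilde f=\sum_{|\al|=d}(a_\al/b_\al)\Th(\al)$ with each $b_\al$ a power of $2$; clearing denominators, $2^r\tilde f=\sum a'_\al\Th(\al)$ with $a'_\al\in R$; since $2^r\mid 2^r\tilde f$ in $R[e_1,\dots,e_n]$ and the $\Th(\al)$ with $|\al|=d$ are linearly independent modulo any positive integer there (Lemma~\ref{lem:invind}), one gets $2^r\mid a'_\al$ for all $\al$, hence $r=0$ and $2^d f=\tilde f\in\langle\Th(\al)\rangle_{|\al|=d}$. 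This is your ``no denominator at all in $e$-coordinates'' observation made precise; the appeal to Lemma~\ref{lem:invind} is what replaces the vague content estimate.
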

\begin{proof} The proof for the type $D_n$ with $d<n$ is similar to that for the type $B_n$, so we only consider the latter case.

(1) Suppose $\mathfrak{D}$ is of type $B_n$. We first prove it for $F=F_a$. It is equivalent to show that $$2^d\cdot S^d_R(\La)^W\subset \langle \Th(\alpha)\rangle_{|\al|=d}.$$
For any $f\in S^d_R(\La)^W$, expressing each $\om_i$ in terms of $e_j$, we see that $f=\tilde f/2^d$ for some  $\tilde f\in R[e_1,...,e_n]^W$ of degree $d$. By Lemma \ref{lem:Fainv}, 
$$\tilde f=\sum_{|\al|=d} \frac{a_\alpha}{b_\alpha}\Th(\al) \text{ for some }a_\al, b_\al, $$
where $b_\al$ is some power of $2$.
Clearing the denominators, we see that $2^r\cdot \tilde f=\sum a_\alpha'\Th(\alpha)$ for some $r,a_\alpha'\in \Z$. Since $2^r$ divides the left hand side, by Lemma \ref{lem:invind}, $2^r$ divides $a_\alpha'$ for all $\al$. Therefore, $r=0$ and we obtain $2^d\cdot f=\tilde f=\sum a_\alpha' \Th(\alpha)$.

For a general \fgl $F$ we have (see \eqref{eq-iso-slices})
\[
g\in (\fgr^W)^{(d)}\subset
(\IF^{(d)})^{W}\underset{\sim}{\overset{\Phi_d^{F\to F_a}}\too}
(\IA^{(d)})^W=S^d_R(\La)^W.
\]
Hence
\[
2^d\cdot \Phi_d^{F\to F_a}(g)=\sum_{|\al|=d}
c_\alpha\Th(\al)\in \IA^{(d)}\quad\text{for some }
c_{\al}\in R.
\]
Applying the inverse $\Phi^{F_a\to F}$ to  both sides we obtain
\[
2^d\cdot g=\sum_{|\al|=d} c_{\al}\Th(\al).
\]

(2)
 The group $(\IF^W)^{(d)}$ consists of $R$-linear combinations of
products $h=gf$ with $f\in \fgr^W\cap \IF$ of degree $k$ ($2\le k\le
d$) and $g\in \fgr$ of degree $g=d-k$. So by (1), 
\[
2^{d}h=2^{d}gf=g\sum_{|\al|=k}b_{\al}\Th(\al)=\sum_{\deg \Th_i\le d}g_i\Th_i.
\]

If $1/2\in R$, the conclusion follows immediately. 
\end{proof}
\begin{remark}For the type $D_n$, if $1/2\in R$, then the conclusion of the last part in Lemma \ref{lem:prinv} holds for $d\ge n$ as well. 
\end{remark}
\medskip

We now consider the $G_2$ type. Let $\{e_1,e_2,e_3\}$ be the standard basis of $\bbR^3$.
The fundamental weights are
$\om_1=e_3-e_1$ and $\om_2=2e_3-e_1-e_2$,
and the Weyl group associated to $G_2$ is the dihedral group $D_6$.
Define
\[
  \Th_1^{G_2} =
  x_{\om_1}x_{-\om_1}
+
  x_{\om_2-\om_1}x_{\om_1-\om_2}
+
  x_{\om_2-2\om_1}x_{2\om_1-\om_2}\text{ and }
\]
\[
  \Th_2^{G_2} =
  x_{\om_1}x_{-\om_1}
  x_{\om_2-\om_1}x_{\om_1-\om_2}
  x_{\om_2-2\om_1}x_{2\om_1-\om_2}.
\]
A direct computation shows that $\Th^{G_2}_1$, $\Th_2^{G_2}$ are both
$W$-invariant, and $\deg \Th_1^{G_2}=2, \deg \Th_2^{G_2}=6$. 

\begin{lemma}\label{lem:G2}
For the $G_2$ type, we have 
$S^*_{\Z[1/2]}(\La)^W=\Z[1/2][\Th_1^{G_2},\Th_2^{G_2}].$
\end{lemma}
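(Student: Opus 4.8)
The plan is to reduce the $G_2$ case to the same Jacobian/polynomiality argument used for the classical types. By \cite[\S6]{dem-inv} we know that $S^*_{\Z[1/2]}(\La)^W$ is a polynomial ring in two generators $\{h_1,h_2\}$ whose degrees are the exponents of $G_2$ plus one, namely $2$ and $6$. Since $\deg\Th_1^{G_2}=2$ and $\deg\Th_2^{G_2}=6$ match these, it suffices (as in the proof of Lemma \ref{lem:Fainv} for type $D_n$) to show that $\Th_1^{G_2},\Th_2^{G_2}$ generate the invariants over $\qq$, and then to control the denominator of the Jacobian $|\partial(\Th_1^{G_2},\Th_2^{G_2})/\partial(h_1,h_2)|$, showing it is a power of $2$.

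First I would work with the realization of $\La$ and $W=D_6$ on $\bbR^3$ given in the excerpt, with fundamental weights $\om_1=e_3-e_1$, $\om_2=2e_3-e_1-e_2$; concretely it is convenient to pick a two-dimensional coordinate system $(u,v)$ on the weight lattice (e.g. $u=x_{\om_1}$ modulo higher degree, and a second linear coordinate) and compute. One computes directly that $\Th_1^{G_2}$, being $W$-invariant of degree $2$ (as already verified in the excerpt), must be a nonzero scalar multiple over $\qq$ of the unique degree-$2$ invariant; and that $\Th_2^{G_2}$ has degree $6$ and is not a polynomial in $\Th_1^{G_2}$ alone (its leading term is, up to scalar, a product of the six linear forms $\pm\om_1$, $\pm(\om_2-\om_1)$, $\pm(\om_2-2\om_1)$ cutting out the reflection hyperplanes, i.e. the discriminant, which is genuinely of degree $6$ and not a power of the quadratic). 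Hence $\{\Th_1^{G_2},\Th_2^{G_2}\}$ is a homogeneous system of parameters in $S^*(\La)^W\otimes\qq$ with the correct degrees $2,6$, and since the product of the degrees $2\cdot 6=12$ equals the order $|W|=|D_6|$ of the Weyl group, the standard criterion (e.g. via the Jacobian being nonzero, or Chevalley's theorem) shows they freely generate the rational invariant ring: $S^*_\qq(\La)^W=\qq[\Th_1^{G_2},\Th_2^{G_2}]$.

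It then remains to pass from $\qq$ to $\Z[1/2]$. Writing $\Th_i^{G_2}=\sum_{|\al|} c_\al^{(i)} h^\al$ with $c_\al^{(i)}\in\qq$, one has $|\partial(\Th_1^{G_2},\Th_2^{G_2})/\partial(h_1,h_2)|\in\qq^\times$, and this Jacobian divides $|\partial(\Th_1^{G_2},\Th_2^{G_2})/\partial(e_1,e_2,e_3)|$-type data; concretely, comparing with the discriminant $\prod_{\text{pos.\ roots}}(\text{root})$ one checks the Jacobian is an integer up to inverting $2$, and conversely $1/(\text{Jacobian})$ is likewise, so the Jacobian is a unit in $\Z[1/2]$. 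Therefore the change of basis between $\{h_1,h_2\}$ and $\{\Th_1^{G_2},\Th_2^{G_2}\}$ is invertible over $\Z[1/2]$, giving $S^*_{\Z[1/2]}(\La)^W=\Z[1/2][\Th_1^{G_2},\Th_2^{G_2}]$. The main obstacle I expect is the last step: bounding the denominators of the Jacobian, which requires an explicit (but finite) computation of $\Th_1^{G_2}$ and $\Th_2^{G_2}$ in the $\om_i$-coordinates and of their leading homogeneous parts in terms of the $e_j$; the factor of $2$ enters precisely because the $e_j$ are only half-integral combinations of the $\om_i$ (as already noted in the Remark after Lemma \ref{lem:invind}), so all denominators are forced to be powers of $2$.
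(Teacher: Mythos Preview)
Your outline is essentially the same strategy as the paper's proof: invoke Demazure to get generators $h_1,h_2$ of degrees $2,6$ over $\Z[1/2]$, show that $\Th_1^{G_2},\Th_2^{G_2}$ generate over $\qq$, and then argue that the Jacobian $|\partial\Th_i^{G_2}/\partial h_j|$ is a unit in $\Z[1/2]$. The paper makes the last step concrete by directly computing
\[
\Big|\frac{\partial \Th^{G_2}_i}{\partial\om_j}\Big|=-4\,\om_1\om_2(\om_1-\om_2)(2\om_1-\om_2)(3\om_1-2\om_2)(3\om_1-\om_2),
\]
whose content is a power of $2$; since $|\partial\Th_i/\partial h_j|\in\Z[1/2]$ (because $\Th_i\in\Z[1/2][h_1,h_2]$) and it divides this polynomial via the chain rule, it must be $\pm 2^m$. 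Your plan leaves exactly this computation as the ``main obstacle,'' so what you are missing is just the explicit evaluation the paper supplies.

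One correction: your closing remark that ``the factor of $2$ enters precisely because the $e_j$ are only half-integral combinations of the $\om_i$'' is wrong for $G_2$. In type $G_2$ the weight lattice and root lattice coincide, so there is no half-integrality phenomenon here; the Remark after Lemma~\ref{lem:invind} you are alluding to concerns types $B_n,D_n$ and does not apply. The power of $2$ in the Jacobian above (namely the factor $-4$) comes from the explicit form of $\Th_1^{G_2},\Th_2^{G_2}$, not from a lattice index. Also, writing ``$|\partial(\Th_1^{G_2},\Th_2^{G_2})/\partial(e_1,e_2,e_3)|$'' is ill-posed (a $2\times 3$ matrix has no determinant); work directly in the coordinates $\om_1,\om_2$ as the paper does.
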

\begin{proof}
By \cite[\S6]{dem-inv}, $S_{\Z[1/2]}^*(\La)^W$ is a polynomial ring generated by two homogeneous polynomials of degree 2 and 6, respectively.  Suppose $\{h_1,h_2\}$ is a pair of generators. Note that
the Jacobian  $$\Big|\frac{\partial \Th^{G_2}_i}{\partial {\om_j}}\Big|=-4\om_1\om_2(\om_1-\om_2)(2\om_1-\om_2)(3\om_1-2\om_2)(3\om_1-\om_2).$$
So $\Th_1^{G_2}$ and $\Th_2^{G_2}$ are
algebraically independent. So by \cite[\S3.11]{hum}, $\{\Th^{G_2}_1,\Th^{G_2}_2\}$ is a set of generators of the invariants over $\mathbb{Q}$. Therefore, the Jacobian $|\frac{\partial \Th^{G_2}_i}{\partial h_j}|$ is a rational number  dividing $|\frac{\partial \Th^{G_2}_i}{\partial {\om_j}}|$, so it is a unit in $\Z[1/2]$. The proof is finished. 
\end{proof}

\begin{lemma}\label{lem:G2inv} For each even integer $d\ge 1$, let $r_d$ to be the remainder of $d$ modulo 6, and let  $\zeta_d=\frac{3[d/6]([d/6]+1)}{2}+\frac{r_d}{2}$. If $\mathfrak{D}$ is of type $G_2$, then 
\begin{itemize}
\item[(1)] $2^{\zeta_d}\cdot (R[[\La]]_F^W)^{(d)}\subset \langle \Th(\al)\rangle_{|\al|=d}\subset R[[\La]]_F.$
\item[(2)] $
2^{\zeta_d}\cdot (\IF^W)^{(d)}\subseteq
\{\sum_{\deg \Th_i\le d} g_i\Th_i\mid g_i\in \IF^{(d-\deg \Th_i)}\}.
$
\end{itemize}
If $1/2\in R$, then the ``$\subset$'' can be replaced by ``$=$'' after removing $2^{\zeta_d}$. 
\end{lemma}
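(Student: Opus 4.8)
The plan is to follow the proof of Lemma~\ref{lem:prinv} step by step, replacing the $B_n$/$D_n$ input with Lemma~\ref{lem:G2} and a $2$-primary analysis of the $G_2$-invariants. First I would reduce to $F=F_a$. The deformation map $\Phi^{F\to F_a}$ is a ring homomorphism preserving the filtrations which sends each $x_\la$ to the corresponding element of $R[[\La]]_{F_a}\simeq\prod_d S^d_R(\La)$; hence it sends $\Th_i^{G_2}$, and so each $\Th(\al)$, to its additive realisation. On the $d$-th subsequent quotients it induces the $W$-equivariant isomorphism $\Phi^{F\to F_a}_d\colon\IF^{(d)}\iso\too\IA^{(d)}=S^d_R(\La)$ of \eqref{eq-iso-slices}, which carries $(R[[\La]]_F^W)^{(d)}$ into $S^d_R(\La)^W$ and the class of $\Th(\al)$ to the additive $\Th(\al)$. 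So (1) follows once one knows $2^{\zeta_d}\cdot S^d_R(\La)^W\subseteq\langle\Th(\al)\rangle_{|\al|=d}$ inside $S^d_R(\La)$ (apply $\Phi^{F_a\to F}_d$ to an identity witnessing the inclusion). Given (1), part (2) is deduced exactly as in Lemma~\ref{lem:prinv}(2): $(\IF^W)^{(d)}$ is $R$-spanned by products $gf$ with $f\in\fgr^W\cap\IF$ homogeneous of degree $k$ ($2\le k\le d$) and $\deg g=d-k$; by (1), $2^{\zeta_k}f=\sum_{|\al|=k}b_\al\Th(\al)$, each $\Th(\al)$ with $|\al|=k$ is divisible by some $\Th_i$ with $\deg\Th_i\le k\le d$, and since $\zeta$ is increasing in the even variable $d$ we get $2^{\zeta_d}gf$ in the required shape.

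Next I would record the case $1/2\in R$: by Lemma~\ref{lem:G2}, $S^*_{\Z[1/2]}(\La)^W=\Z[1/2][\Th_1^{G_2},\Th_2^{G_2}]$, so its degree-$d$ part is exactly $\langle\Th(\al)\rangle_{|\al|=d}$, giving the ``$=$'' assertion; in particular the cokernel of $\langle\Th(\al)\rangle_{|\al|=d}\hookrightarrow S^d(\La)^W$ is a finite $2$-group. Thus it remains to prove $2^{\zeta_d}\cdot S^d(\La)^W\subseteq\langle\Th(\al)\rangle_{|\al|=d}$ over $\Z$ --- whence over any $R$ of characteristic $\ne 2$, since forming $W$-invariants of the free $\Z$-module $S^d(\La)$ commutes with $-\otimes_\Z R$ --- and in fact only the localisation at $2$ matters. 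Set $q:=\tfrac12\Th_1^{G_2}$ and $p:=\Th_2^{G_2}$ (degrees $2$ and $6$). From the additive realisations one computes $q\equiv\om_1^2+\om_1\om_2+\om_2^2$ and $p\equiv\om_1^2\om_2^2(\om_1+\om_2)^2$ in $S^*(\La)\otimes\mathbb{F}_2$, which records the two facts I will use: (i) $q$ and $p$ are primitive in $S^*(\La)$; and (ii) $\bar q$ is an irreducible quadratic in $\mathbb{F}_2[\om_1,\om_2]$ not dividing $\bar p$.

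The heart is the following. For $d$ even (for $d$ odd everything is $0$ since $-1\in W$), use $S^*(\La)^W_\qq=\qq[q,p]$ with $p$ a non-zero-divisor and $\qq[q,p]/(p)=\qq[q]$ to write any $f\in S^d(\La)^W\otimes\Z_{(2)}$ as $f=c\,q^{d/2}+p\,h$ with $c\in\qq$ and $h\in S^{d-6}(\La)^W_\qq$, chosen with minimal $2$-adic denominators. Reducing the corresponding $2$-power multiple of $f$ modulo $2$, facts (i)--(ii) force $\overline{q^{d/2}}$ and $\overline{p\,h}$ in $\mathbb{F}_2[\om_1,\om_2]$ not to cancel, so $c\in\Z_{(2)}$ and $h\in S^{d-6}(\La)^W\otimes\Z_{(2)}$. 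Induction on $d$ (base $d<6$: $S^d(\La)^W$ has rank $1$ and contains the primitive $q^{d/2}$) then gives $S^d(\La)^W\otimes\Z_{(2)}=\bigoplus_{2a+6b=d}\Z_{(2)}\,q^ap^b$; since $\Th_1^{G_2}=2q$, the inclusion $\langle\Th(\al)\rangle_{|\al|=d}\otimes\Z_{(2)}=\bigoplus_{2a+6b=d}2^a\Z_{(2)}\,q^ap^b\hookrightarrow S^d(\La)^W\otimes\Z_{(2)}$ has cokernel of exponent $2^{d/2}$, and $d/2\le\zeta_d$ completes (1). (If one would rather not prove the integral invariants polynomial, the same scheme --- clearing the $2$-adic denominators in $f=\sum_{|\al|=d}c_\al\Th(\al)$ one factor $p$ at a time and tracking the contribution of each --- produces directly the stated, somewhat larger, constant $\zeta_d=\tfrac32\lfloor d/6\rfloor(\lfloor d/6\rfloor+1)+\tfrac12 r_d$.)

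The main obstacle is this last step: proving that no $2$-power denominators arise beyond those forced by $\Th_1^{G_2}=2q$. It hinges on the mod-$2$ irreducibility of $\bar q$ together with the fact that $\bar p$ is a product of linear forms none of which is $\bar q$, which is exactly what keeps the leading term $c\,q^{d/2}$ from being absorbed $2$-adically into $p\cdot S^{d-6}(\La)^W$. Everything else is bookkeeping of the kind already carried out for the $B_n$ and $D_n$ cases.
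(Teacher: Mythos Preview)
Your argument is correct and in fact establishes more than the lemma asks: you show that $S^*(\La)^W\otimes\Z_{(2)}=\Z_{(2)}[q,p]$ with $q=\tfrac12\Th_1^{G_2}$ and $p=\Th_2^{G_2}$, whence the exponent of $S^d(\La)^W/\langle\Th(\al)\rangle_{|\al|=d}$ is exactly $2^{d/2}$, and then you observe $d/2\le\zeta_d$. The reductions to $F=F_a$ and to part~(2) are the same as in the paper.

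The paper's proof is different and more elementary, though it yields only the weaker bound $\zeta_d$. Instead of working modulo~$2$ in $\mathbb{F}_2[\om_1,\om_2]$, the paper evaluates at $(\om_1,\om_2)=(1,1)$, using that $\Th_2^{G_2}(1,1)=0$ while $\Th_1^{G_2}(1,1)=2$. Writing $2^r f=\sum_{i=0}^{k} a_i\Th_2^i\Th_1^{3k-3i}$ for $d=6k$, the evaluation kills every term with $i\ge 1$ and forces $2^{r-3k}\mid a_0$; one then factors out $\Th_2$ from the remaining sum and inducts on $k$ (the recursion $\zeta_{6k}=\zeta_{6(k-1)}+3k$ is precisely what produces the constant $\zeta_d$). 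The case $d=6k+r_d$ is reduced to $d=6k$ by pulling out $\Th_1^{r_d/2}$. So the paper peels off the $\Th_1$-leading term via a specialization that annihilates $\Th_2$, whereas you peel off the $\Th_2$-free term via a mod-$2$ coprimality argument between $\bar q$ and $\bar p$. Your route buys the sharp exponent $2^{d/2}$ and the integral (at~$2$) polynomiality of the invariant ring; the paper's route avoids the UFD/irreducibility analysis at the cost of the looser bound.
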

\begin{proof} 
(1) Suppose $F=F_a$. For any $f\in S^d_R(\La)^W$, then $f\in R[1/2][\Th_1,\Th_2]$, so by Lemma \ref{lem:G2}, we can write 
$2^r\cdot f=\sum_{i=0}^{[d/6]}a_i\Th_2^i\Th_1^{d/2-3i}$ with $a_i\in R$. To prove the lemma, it suffices to show that the smallest $r$ in the equation is less than or equal to $\zeta_d$, i.e., to show that if $r>\zeta_d$, then $2|a_i$ for all $i$. Notice that $\frac{\Th_1}{2}\in S_R^*(\La)$, but $\frac{\Th_1}{4}\not\in S_R^*(\La)$ and $\frac{\Th_2}{2}\not\in S_R^*(\La)$.

We prove it  for  $d=6k$ first, in which case $\zeta_d=\frac{3k(k+1)}{2}$. We proceed by induction on $k$.  If $k=1$, then $r>\zeta_6=3$ and 
$2^rf=a\Th_2+b\Th_1^3.$
 Letting $\om_1=\om_2=1$, we have $\Th_2(1,1)=0$ and $\Th_1(1,1)=2$, so $2^r|b\cdot 8$, hence, $2^{r-3}|b$. So $2^{r-3}|a$ since $\Th_2/2\not\in S_R^*(\La)$.
 
 Assume the conclusion for $k_0-1$, then for $k_0$, note that $\zeta_{6k_0}=\zeta_{6k_0-6}+3k_0$. The equation becomes 
 $$2^rf=\sum_{i=0}^{k_0}a_i\Th_2^i\Th_1^{3k_0-3i},$$
 and $r>\zeta_{6k_0}$. Letting $\om_1=\om_2=1$, we see that $2^r|a_{0}2^{3k_0}$, so $2^{r-3k_0}|a_0$. Therefore, 
 $$2^{r-3k_0}|\sum_{i=1}^{k_0}a_i\Th_2^i\Th_1^{3k_0-3i}=\Th_2\sum_{j=0}^{k_0-1}a_{j+1}\Th_2^{j}\Th_1^{3k_0-3-3j}.$$
 So $2^{\zeta_{6k_0-6}}|2^{r-3k_0}|\sum_{j=0}^{k_0-1}a_{j+1}\Th_2^j\Th_1^{3k_0-3-3j}$. By induction,  it implies that $2|a_j$ for $j=1,...,k_0$.
 
 We then assume that $d=6k+r_d$ with $r_d>0$ the remainder of $d$ modulo 6. Then $r>\zeta_d=\zeta_{6k}+r_d/2$, and we have 
 $$2^rf=\Th_1^{r_d/2}\sum_{i=0}^ka_i\Th_2^i\Th_1^{3k-3i}.$$
 So $2^{r-\tfrac{r_d}{2}}|\sum_{i=0}^ka_i\Th_2^i\Th_1^{3k-3i}$. The conclusion then follows from the first part.

The proof of the rest is similar to that of \ref{lem:prinv}.
\end{proof}

\begin{example} \label{ex:AC}If $\mathfrak{D}$ is of type $A_n$ or $C_n$, then the torsion index is 1, and $\{e_i\}_{i=1}^n$ is a basis of $\La$. Following from the same idea of Lemma \ref{lem:prinv},  we have 
$$(R[[\La]]_F^W)^{(d)}= \langle \Th(\al)\rangle_{|\al|=d}$$
and 
$$(\IF^W)^{(d)}=
\big\{\sum_{\deg \Th_i\le d} g_i\Th_i\mid g_i\in \IF^{(d-\deg \Th_i)}\big\}.$$

\end{example}

We are ready now to prove the main result of this section

\begin{theorem}\label{ABD} 
 Let $F$ and $F'$ be formal group laws over a ring $R$ of
  characteristic different from 2.
 For the type $B_n$ with $n\ge 3$ and $d\ge 1$ (resp. $D_n$ with $n\ge 4$ and $n>d\ge 1$), we have
  $\tau_d^{F\to F'}\mid 2^d$. For the type $G_2$, $\tau_d^{F\to F'}|2^{\zeta_d}$, where $\zeta_d$ was defined in Lemma \ref{lem:G2inv}. In particular, for type $A_n$, $B_n$, $C_n$, $D_n$  or $G_2$, if $\frac{1}{t}\in R$,  then $\tau_d^{F\to F'}=1$ for every $d\ge 1$.

\end{theorem}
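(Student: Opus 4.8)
The key observation is that all the hard work has already been done in Lemmas~\ref{lem:prinv}, \ref{lem:G2inv} and Example~\ref{ex:AC}, which control the homogeneous components of $(\IF^W)^{(d)}$ for a \emph{single} \fgl. The bridge between two different \fgl's $F$ and $F'$ is the deformation map $\Phi^{F\to F'}_d$ of \eqref{eq-iso-slices}, which is a filtration-preserving $R$-algebra isomorphism that acts as the identity $\prod_i x_{\la_i}\mapsto \prod_i x_{\la_i}$ on the $d$-th slices. So the strategy is: transport everything to the additive case, apply the structural lemmas there, and transport back.

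\begin{proof}
We treat the type $B_n$ (with $n\ge 3$, $d\ge 1$); the case $D_n$ with $n>d\ge 1$ is identical, and $G_2$ differs only in replacing the exponent $d$ by $\zeta_d$ throughout. By Lemma~\ref{lem:prinv}(2) applied to the \fgl $F'$ we have
\[
2^d\cdot (\IFp^W)^{(d)}\subseteq \Big\{\sum_{\deg\Th_i\le d} g_i\Th_i \;\Big|\; g_i\in \IFp^{(d-\deg\Th_i)}\Big\}.
\]
Each generator $g_i\Th_i$ on the right lies in $(\IFp^W)^{(d)}$ and, since $\Phi^{F\to F'}_d$ is the identity on monomials $\prod x_{\la_j}$ and both $g_i$ and $\Th_i$ are (represented by) such monomials in the respective slices, we have $g_i\Th_i = \Phi^{F\to F'}_d(g_i'\Th_i)$ for the corresponding element $g_i'\Th_i\in(\IF^W)^{(d)}$; more precisely one uses that the composite $\log_{F'}^\star\circ(\log_F^\star)^{-1}$ of Theorem~\ref{thmexs} equals $\Phi^{F\to F'}_d$ and restricts to an isomorphism $(\IF^W)^{(d)}_\qq\iso (\IFp^W)^{(d)}_\qq$, while the integral statement follows because $\Th_i\in\fgr^W$ maps to $\Th_i\in R[[\La]]_{F'}^W$ under $\Phi^{F\to F'}$ (the $\Th_i$ are expressed via $+_F$, $-_F$ and products, and $\Phi^{F\to F'}$ intertwines these by Definition~\ref{rem-phi-add}). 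Hence the right-hand set is contained in $\Phi^{F\to F'}_d\big((\IF^W)^{(d)}\big)$, and therefore
\[
2^d\cdot (\IFp^W)^{(d)}\;\subseteq\;\Phi^{F\to F'}_d\big((\IF^W)^{(d)}\big).
\]
By Definition~\ref{defFexp} this means $N_d^{F\to F'}=2^d$ is a valid choice (it is coprime to the characteristic of $R$ since $2\in R^\times$), so $\tau_d^{F\to F'}\mid 2^d$. For $G_2$ one replaces Lemma~\ref{lem:prinv}(2) by Lemma~\ref{lem:G2inv}(2) and obtains $\tau_d^{F\to F'}\mid 2^{\zeta_d}$.

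For the final assertion, assume $\tfrac1t\in R$. For types $A_n$ and $C_n$ the torsion index is $t=1$, so this hypothesis is vacuous; Example~\ref{ex:AC} gives $(\IFp^W)^{(d)}=\big\{\sum_{\deg\Th_i\le d}g_i\Th_i\big\}$ with no factor of $2$, and the argument above (now with the constant $1$ in place of $2^d$) yields $(\IFp^W)^{(d)}=\Phi^{F\to F'}_d\big((\IF^W)^{(d)}\big)$, whence $\tau_d^{F\to F'}=1$. For types $B_n$, $D_n$, $G_2$ the torsion index $t$ is a power of $2$, so $\tfrac1t\in R$ forces $\tfrac12\in R$; then the last clauses of Lemmas~\ref{lem:prinv} and \ref{lem:G2inv} (together with the Remark covering $D_n$ in degrees $d\ge n$) give equality without the power-of-$2$ factor, and the same transport argument gives $\tau_d^{F\to F'}=1$ for all $d\ge 1$.
\end{proof}

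\emph{Main obstacle.} The one point needing genuine care is the compatibility claim that the generators $g_i\Th_i$ of the right-hand set are actually in the image $\Phi^{F\to F'}_d\big((\IF^W)^{(d)}\big)$, i.e. that $\Phi^{F\to F'}$ sends the invariant $\Th_i\in\fgr^W$ to the correspondingly-defined $\Th_i\in R[[\La]]_{F'}^W$ (not merely to something congruent to it on the $d$-th slice). This holds because the $\Th_i$ are built from the $x_{\pm e_j}$ purely by $F$-operations and ring multiplication, which $\Phi^{F\to F'}$ preserves by construction; but it is worth stating explicitly. Everything else is bookkeeping with the filtration isomorphisms \eqref{eq-iso-slices} and the already-established structural lemmas.
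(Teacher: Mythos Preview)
Your argument is correct and is exactly the paper's: apply Lemma~\ref{lem:prinv}(2) (resp.\ Lemma~\ref{lem:G2inv}(2), Example~\ref{ex:AC}) to the target law $F'$, then use that the slice isomorphism $\Phi_d^{F\to F'}$ sends the class of $g_i\Th_{i,F}$ to the class of $g_i\Th_{i,F'}$ (since $\Phi_d^{F\to F'}$ acts as the identity on monomial classes $\prod_j x_{\la_j}$), so the right-hand set lies in $\Phi_d^{F\to F'}\big((\IF^W)^{(d)}\big)$.

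One point to correct, though it does not affect the validity of the proof: your ``more precisely'' clause and the \emph{Main obstacle} paragraph assert that the \emph{full} deformation map $\Phi^{F\to F'}$ sends $\Th_{i,F}$ to $\Th_{i,F'}$, on the grounds that it preserves the $F$-operations used to build $\Th_i$. This is false in general. By Definition~\ref{rem-phi-add}, $\Phi^{F\to F'}$ fixes only the chosen basis generators $x_{\om_j}$; for type $B_n$ the $e_j$ are not fundamental weights (e.g.\ $e_j=\om_j-\om_{j-1}$ for $1<j<n$), and one finds $\Phi^{F\to F'}(x_{e_j})=x_{\om_j}+_F(-_F x_{\om_{j-1}})$ in $R[[\La]]_{F'}$, whereas $x_{e_j}=x_{\om_j}+_{F'}(-_{F'}x_{\om_{j-1}})$ there; these differ unless $F=F'$. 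Hence $\Phi^{F\to F'}(\Th_{i,F})\neq\Th_{i,F'}$ as elements of $R[[\La]]_{F'}$. Fortunately this stronger claim is unnecessary: only the basis-independent slice identity $\Phi_d^{F\to F'}(\prod x_{\la_j})=\prod x_{\la_j}$ is needed, which you stated first and which is precisely what the paper invokes. A minor slip: the hypothesis is $\operatorname{char}R\neq 2$, not $2\in R^\times$; the former is what makes $2^d$ coprime to the characteristic in Definition~\ref{defFexp}.
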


\begin{proof}Suppose that $\mathfrak{D}$ is of type $B_n$ or $D_n$. 
 By Lemma~\ref{lem:prinv}.(2) we have
\[
2^d\cdot (\IFp^W)^{(d)}\subseteq \{\sum_{\deg \Th_i\le d}
g_i\Th_{i,F'}\mid g_i\in \IFp^{(d-\deg \Th_i)}\}\subseteq
\Phi_d^{F\to F'}((\IF^W)^{(d)}),
\]
as $\Phi_{d}^{F\to F'}(g_i\Th_{i,F})=g_i\Th_{i,F'}$, where $\Theta_{i,F}$ and $\Th_{i,F'}$ are
the respective $W$-invariant elements for the \fgl $F$ and $F'$.
So $\tau_d^{F\to F'}\mid 2^d$ for any $F$ and $F'$. 

The part for $G_2$ follows similarly from Lemma \ref{lem:G2inv}.

The third statement follows immediately, after using Example \ref{ex:AC} if $\mathfrak{D}$ is of type $A_n$ or $C_n$. 
\end{proof}

Let  $\hh$ be an
  oriented cohomology theory satisfying the condition in \cite[Theorem
  13.12]{ca&za&pe-tor} with the corresponding formal group law $F$ over
the coefficient ring $R$ of characteristic zero.
Let
  $\mathfrak{B}$ be the variety of Borel subgroups 
 and let $\mathfrak{c}_F^{(d)}\colon \IF^{(d)}\to \gamma^{(d)}\hh(\mathfrak{B})$
denote the characteristic map.
Identifying the $R$-module $\IF^{(d)}$ with
$\calI_{F'}^{(d)}$ via $\Phi_d^{F\to F'}$, setting $A=\IF^{(d)}=\calI_{F'}^{(d)}$,
$M=\ker c_F^{(d)}$, $M'=\ker c_{F'}^{(d)}$, $N=(\IF^W)^{(d)}$,
$N'=(\calI^W_{F'})^{(d)}$, and assuming that there are exist exponents
$\tau$ (resp. $\tau'$) of
$N$ in $M$ (resp. $N'$ in $M'$) we obtain the following
relations between the annihilators of torsion elements in
$\gamma^{(d)}\hh(\mathfrak{B})=A/M$ and $\gamma^{(d)}\hh'(\mathfrak{B})=A/M'$:

\begin{lemma} Let $z\in A$ be such
  that its image $\bar z$ in $A/M$ is torsion or zero. Then its image
  $\bar z'$ in $A/M'$ is torsion or zero. Let $a$ and $a'$
  denote the smallest positive annihilators of $\bar z$ and $\bar z'$
  respectively.

Then
$a' \mid \tau^{F'\to F}_d \tau a$ and $a \mid \tau^{F\to F'}_d \tau' a'$.
\end{lemma}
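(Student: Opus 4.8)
The plan is to work entirely inside the fixed $R$-module $A = \IF^{(d)} = \IFp^{(d)}$, where the two presentations are identified via $\Phi_d^{F\to F'}$, and to track how the four submodules $M, M', N, N'$ sit inside $A$. By hypothesis $N'$ is identified with $\Phi_d^{F\to F'}(N)$ up to the exponent $\tau = \tau_d^{F\to F'}$: concretely, $\tau \cdot N' \subseteq N \subseteq A$ and, symmetrically, $\tau' := \tau_d^{F'\to F}$ satisfies $\tau' \cdot N \subseteq N'$. The link to the geometry is Theorem~\ref{geomthm}, which (under the characteristic-zero/$1/t$ hypotheses, inherited here from the global assumptions on $\hh$) gives $M = \ker \cc_F^{(d)} = N$ and $M' = \ker \cc_{F'}^{(d)} = N'$ after the identification; so in fact $M = N$ and $M' = N'$ as submodules of $A$, and the problem reduces to a purely algebraic comparison of annihilators of $\bar z \in A/N$ and $\bar z' \in A/N'$.

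First I would record the elementary fact that if $P \subseteq Q$ are submodules of $A$ with $c \cdot Q \subseteq P$ for a positive integer $c$, then for any $z \in A$: if $z$ has annihilator $b$ modulo $P$ (i.e. $b z \in P$, $b$ minimal), then $z$ is annihilated modulo $Q$ by $b$ as well, hence its minimal annihilator modulo $Q$ divides $b$; conversely if $z$ has minimal annihilator $b$ modulo $Q$, then $b z \in Q$ so $cb z \in P$, hence the minimal annihilator modulo $P$ divides $cb$. I would apply this with $P = N$, $Q = N'$... but $N$ and $N'$ are not nested in general, so the correct move is to interpose $N \cap N'$ or, more efficiently, to chain through the inclusions $\tau N' \subseteq N$ and $\tau' N \subseteq N'$ directly. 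Namely: let $a$ be the minimal annihilator of $\bar z$ in $A/N$, so $a z \in N$. Then $\tau' a z \in \tau' N \subseteq N'$, so $\bar z'$ is torsion (or zero) in $A/N'$ and $a' \mid \tau' a$. Symmetrically $a \mid \tau a'$. This already proves a weaker statement; the stated bounds $a' \mid \tau^{F'\to F}_d \tau a$ and $a \mid \tau^{F\to F'}_d \tau' a'$ contain an extra factor, which tells me the intended setup is slightly more general than $M = N$: the lemma is phrased for \emph{arbitrary} $M \supseteq$ nothing in particular with $\tau$ the exponent of $N$ \emph{in} $M$, meaning $\tau M \subseteq N \subseteq M$, not $M = N$.

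So the genuine argument is the two-step chase through $M, N, N', M'$. Suppose $\bar z$ in $A/M$ has minimal positive annihilator $a$, i.e. $a z \in M$. Since $\tau M \subseteq N$ (exponent of $N$ in $M$), we get $\tau a z \in N$. Now push across: $\tau' N \subseteq N'$ gives $\tau' \tau a z \in N' \subseteq M'$, hence $\bar z'$ is torsion or zero in $A/M'$ and $a' \mid \tau' \tau a = \tau_d^{F'\to F} \tau a$. Wait — I must be careful about which exponent is which: reading the statement, the factor attached to the $a \to a'$ direction is $\tau^{F'\to F}_d$, so the convention must be $\tau^{F'\to F}_d$ is the one realizing $\tau^{F'\to F}_d \cdot N' \subseteq \Phi_d^{F'\to F}(N') \cap (\cdots)$, i.e. it is $\tau N \supseteq$-type data in the $F' \to F$ direction; I would state the inclusions $\tau_d^{F\to F'} (\IFp^W)^{(d)} \subseteq \Phi_d^{F\to F'}((\IF^W)^{(d)})$ explicitly from Definition~\ref{defFexp} at the start and then the indices take care of themselves. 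The symmetric direction: $a' z \in M'$, $\tau' M' \subseteq N'$, then across via $\tau^{F\to F'}_d$ into $N \subseteq M$, giving $a \mid \tau^{F\to F'}_d \tau' a'$.

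The main obstacle, and the only real point requiring care, is bookkeeping: making sure that "exponent of $N$ in $M$" is used with the correct inclusion direction ($\tau M \subseteq N$, since $N \subseteq M$ is automatic as $M = \ker \cc^{(d)}$ contains $(\IF^W)^{(d)}$... actually here $N \subseteq M$ needs $(\IF^W)^{(d)} \subseteq \ker \cc_F^{(d)}$, which holds because invariants in the augmentation ideal map to zero under the characteristic map — this is part of Theorem~\ref{geomthm}), and that the two $F\to F'$ exponents are applied on the correct sides and with the correct superscripts. I would also note at the outset that all of $M, M', N, N'$ are being regarded as submodules of the single $R$-module $A$ via $\Phi_d^{F\to F'}$, so that "$z \in A$" makes unambiguous sense and the pushes $\tau' N \subseteq N'$ etc. are literal inclusions rather than statements up to isomorphism. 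With that dictionary fixed, the proof is the four-line chase above in each direction.
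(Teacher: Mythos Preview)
Your proposal is correct and, after the self-corrections midway through, lands on precisely the paper's argument: from $az\in M$ use $\tau M\subseteq N$ to get $\tau a z\in N$, then $\tau_d^{F'\to F}\cdot N\subseteq N'\subseteq M'$ to conclude $a'\mid \tau_d^{F'\to F}\tau a$, and symmetrically for the other divisibility. The only blemish is the early overloading of $\tau,\tau'$ (first as $\tau_d^{F\to F'},\tau_d^{F'\to F}$, then as the exponents of $N$ in $M$ and $N'$ in $M'$), which you catch and fix; in a clean write-up just adopt the paper's convention from the outset and the proof is the four-line chase you describe.
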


\begin{proof}
(a) Since $a \bar z=0$, $az \in M$, therefore, $\tau az\in N$ and
$\tau^{F'\to F}_d \tau a z \in N'\subset M'$. So $\tau^{F'\to F}_d \tau a
\bar z'=0$ which implies that $a' \mid \tau^{F'\to F}_d \tau a$.
\end{proof}

\begin{corollary} \label{torsapp} Assume that
  $\gamma^{(d)}\hh'(\mathfrak{B})$ has no torsion and that the
  exponent $\tau'$ exists. Then the torsion of
  $\gamma^{(d)}\hh(\mathfrak{B})$ is annihilated by $\tau_d^{F\to F'} \tau'$.
\end{corollary}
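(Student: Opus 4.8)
The plan is to read the statement off from the preceding Lemma, applied to an arbitrary torsion class of $\gamma^{(d)}\hh(\mathfrak{B})$. Recall the set-up there: via the isomorphism $\Phi_d^{F\to F'}$ one identifies $A=\IF^{(d)}=\IFp^{(d)}$, so that $\gamma^{(d)}\hh(\mathfrak{B})=A/M$ with $M=\ker\cc_F^{(d)}$ and $\gamma^{(d)}\hh'(\mathfrak{B})=A/M'$ with $M'=\ker\cc_{F'}^{(d)}$; moreover $N=(\IF^W)^{(d)}$ and $N'=(\IFp^W)^{(d)}$ satisfy $N\subseteq M$, $N'\subseteq M'$, and $\tau M\subseteq N$, $\tau'M'\subseteq N'$ by the assumed existence of the exponents $\tau,\tau'$, while Definition~\ref{defFexp} (read through the identification) gives $\tau_d^{F\to F'}N'\subseteq N$.

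First I would fix $z\in A$ whose class $\bar z$ in $\gamma^{(d)}\hh(\mathfrak{B})=A/M$ is torsion; if the torsion subgroup vanishes there is nothing to prove. By the Lemma the class $\bar z'$ of $z$ in $\gamma^{(d)}\hh'(\mathfrak{B})=A/M'$ is torsion or zero, and since this group is assumed torsion-free, $\bar z'=0$, i.e.\ $z\in M'$; thus the smallest positive annihilator of $\bar z'$ is $a'=1$. I would then invoke the divisibility $a\mid\tau_d^{F\to F'}\tau'a'$ (with $a$ the smallest positive annihilator of $\bar z$) — the half of the Lemma symmetric in $F\leftrightarrow F'$ to the one written out there: from $a'z\in M'$ one gets $\tau'a'z\in N'$, hence $\tau_d^{F\to F'}\tau'a'z\in\tau_d^{F\to F'}N'\subseteq N\subseteq M$, so $\tau_d^{F\to F'}\tau'a'\,\bar z=0$ and $a\mid\tau_d^{F\to F'}\tau'a'$. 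Since $a'=1$, this says $\tau_d^{F\to F'}\tau'$ annihilates $\bar z$; letting $\bar z$ range over all torsion classes, $\tau_d^{F\to F'}\tau'$ annihilates the torsion subgroup of $\gamma^{(d)}\hh(\mathfrak{B})$.

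There is no real obstacle here — the corollary is a formal consequence of the preceding Lemma. The only items worth a line of care are the convention that the smallest positive annihilator of the zero class is $1$ (so that the bound $a\mid\tau_d^{F\to F'}\tau'a'$ collapses to the asserted $\tau_d^{F\to F'}\tau'$), and checking that the chain $z\in M'\Rightarrow\tau'z\in N'\Rightarrow\tau_d^{F\to F'}\tau'z\in N\subseteq M$ uses precisely the inclusions recalled in the first paragraph, all of which are immediate from the hypotheses and from Definition~\ref{defFexp}.
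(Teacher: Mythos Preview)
Your proposal is correct and is exactly the intended argument: the paper states Corollary~\ref{torsapp} without proof, as it follows immediately from the preceding Lemma by taking $a'=1$ when $A/M'$ is torsion-free. Your added line spelling out the symmetric chain $z\in M'\Rightarrow\tau'z\in N'\Rightarrow\tau_d^{F\to F'}\tau'z\in N\subseteq M$ is precisely the proof of the second divisibility in that Lemma (which the paper omits, proving only the first by the same method), so nothing is missing.
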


\begin{corollary}\label{torsch} Let $G$ be a split, simple simply connected linear
  algebraic group of type $A_n$, $B_n$, $C_n$ , $D_n$ or $G_2$. 

If $\frac{1}{t}\in R$, then  $\gamma^{(d)}\hh(\mathfrak{B})$ is
torsion free for every $d\ge 1$ and the deformation map $\Phi^{F\to
  F_a}_d$ induces an isomorphism $\gamma^{(d)}\hh(\mathfrak{B}) \simeq \gamma^{(d)}\CH(\mathfrak{B};R)$.
\end{corollary}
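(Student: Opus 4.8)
The plan is to deduce Corollary~\ref{torsch} from Theorem~\ref{ABD} together with Theorem~\ref{geomthm} and Corollary~\ref{torsapp}. The two statements to prove are: (i) $\gamma^{(d)}\hh(\mathfrak B)$ is torsion free for all $d\ge 1$, and (ii) $\Phi^{F\to F_a}_d$ induces an isomorphism $\gamma^{(d)}\hh(\mathfrak B)\simeq\gamma^{(d)}\CH(\mathfrak B;R)$. The key input is that under the hypothesis $\tfrac1t\in R$ we have $\tfrac12\in R$ (since $t$ is $1$ or a power of $2$ in all types under consideration), so Theorem~\ref{ABD} applies and gives $\tau_d^{F\to F'}=1$ for every $d\ge 1$ and every pair of formal group laws $F,F'$ over $R$; in particular taking $F'=F_a$ we get $\tau_d^{F\to F_a}=\tau_d^{F_a\to F}=1$.

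First I would fix the identifications set up in the paragraph preceding the lemma: with $A=\IF^{(d)}$ identified with $\IA^{(d)}$ via $\Phi^{F\to F_a}_d$, put $M=\ker \cc_F^{(d)}$ and $M'=\ker \cc_a^{(d)}$. By the second assertion of Theorem~\ref{geomthm} (applicable since $\tfrac1t\in R$), we have $M=(\IF^W)^{(d)}$ and $M'=(\IA^W)^{(d)}$, and these agree with the subgroups $N$, $N'$ appearing in the lemma. Hence the exponents $\tau$ of $N$ in $M$ and $\tau'$ of $N'$ in $M'$ are trivially $\tau=\tau'=1$, because $N=M$ and $N'=M'$. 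Now apply the Lemma with $z\in A$ arbitrary whose image in $A/M'=\gamma^{(d)}\CH(\mathfrak B;R)$ is torsion or zero: the conclusion $a\mid \tau_d^{F\to F_a}\,\tau' a'$ with $\tau_d^{F\to F_a}=1$, $\tau'=1$ shows $a\mid a'$, and symmetrically $a'\mid a$; more to the point, if $\gamma^{(d)}\CH(\mathfrak B;R)$ is torsion free one gets directly that $\gamma^{(d)}\hh(\mathfrak B)$ is torsion free. So a clean route is: invoke Corollary~\ref{torsapp} with $\hh'=\CH(-;R)$, $F'=F_a$, noting $\tau'=1$ and that $\gamma^{(d)}\CH(\mathfrak B;R)=\CH^d(\mathfrak B)\otimes R$ is torsion free because $\CH^*(\mathfrak B)$ is a free abelian group (Bruhat decomposition of $\BB=G/B$ into affine cells); then the torsion of $\gamma^{(d)}\hh(\mathfrak B)$ is annihilated by $\tau_d^{F\to F_a}\tau'=1$, i.e.\ it vanishes. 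This proves (i).

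For (ii), both $\cc_F^{(d)}$ and $\cc_a^{(d)}$ are surjective with kernels $(\IF^W)^{(d)}$ and $(\IA^W)^{(d)}$ respectively (Theorem~\ref{geomthm}), so $\gamma^{(d)}\hh(\mathfrak B)\cong \IF^{(d)}/(\IF^W)^{(d)}$ and $\gamma^{(d)}\CH(\mathfrak B;R)\cong\IA^{(d)}/(\IA^W)^{(d)}$. The deformation isomorphism $\Phi^{F\to F_a}_d\colon\IF^{(d)}\iso\too\IA^{(d)}$ from \eqref{eq-iso-slices} sends $(\IF^W)^{(d)}$ onto $(\IA^W)^{(d)}$: by Theorem~\ref{ABD} (equivalently, by Example~\ref{ex:AC} for types $A_n$, $C_n$ and by the ``$\tfrac12\in R$'' case of Lemmas~\ref{lem:prinv} and \ref{lem:G2inv} for $B_n$, $D_n$, $G_2$) both modules are carried to the span $\langle\Th(\al)\rangle$-generated ideals, which correspond to each other under $\Phi^{F\to F_a}_d$ since $\Phi^{F\to F_a}_d(g_i\Th_{i,F})=g_i\Th_{i,F_a}$. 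Hence $\Phi^{F\to F_a}_d$ descends to an isomorphism of the quotients, which is exactly the claimed isomorphism $\gamma^{(d)}\hh(\mathfrak B)\simeq\gamma^{(d)}\CH(\mathfrak B;R)$.

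The main obstacle — really the only nontrivial point — is verifying carefully that all the hypotheses line up: that $\tfrac1t\in R$ genuinely forces $\tfrac12\in R$ (so that Theorem~\ref{ABD} and the ``$=$'' cases of Lemmas~\ref{lem:prinv}, \ref{lem:G2inv}, Example~\ref{ex:AC} are in force), that Theorem~\ref{geomthm}'s kernel computation $\ker\cc_F^{(d)}=(\IF^W)^{(d)}$ needs exactly $\tfrac1t\in R$, and that $\CH^*(\BB)$ being a free $\Z$-module (whence $\CH^d(\BB)\otimes R$ is $R$-flat and has no torsion as an abelian group) is what feeds the ``no torsion'' hypothesis of Corollary~\ref{torsapp}. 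Once these are in place the argument is purely formal: trivial exponents plus an exact sequence give torsion-freeness, and the deformation map matches the kernels.
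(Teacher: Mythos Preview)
Your proof is correct and follows essentially the same route as the paper: for (i) you apply Corollary~\ref{torsapp} with $\hh'=\CH(-;R)$ together with $\tau_d^{F\to F_a}=1$ from Theorem~\ref{ABD}, and for (ii) you use the explicit description of $(\IF^W)^{(d)}$ as $\{\sum g_i\Th_i\}$ (Lemmas~\ref{lem:prinv}, \ref{lem:G2inv}, Example~\ref{ex:AC}) to show that $\Phi_d^{F\to F_a}$ matches the kernels --- exactly what the paper does. Your write-up is in fact more careful than the paper's in two places: you explicitly justify torsion-freeness of $\CH^*(\BB)$ via the Bruhat decomposition, and you cite all three sources (not just Lemma~\ref{lem:prinv}) to cover every Dynkin type. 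One small expository slip: the sentence ``$\tfrac1t\in R$ forces $\tfrac12\in R$'' is literally false for types $A_n$, $C_n$ where $t=1$; but since you correctly invoke Example~\ref{ex:AC} (which needs no hypothesis on $2$) for those types, the argument itself is unaffected.
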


\begin{proof}
To see that $\gamma^{(d)}\hh(\mathfrak{B})$ is torsion free, take
$\hh'(-)=\CH(-;R)$ and apply Theorem~\ref{ABD} and Corollary~\ref{torsapp}.
Observe that $\ker \mathfrak{c}_a^{(d)}$ is finitely generated over $R$ so that
the exponent $\tau'$ exists.
  
Let $z\in \ker \mathfrak{c}_F^{(d)}$. By Lemma \ref{lem:prinv},  $z=\sum
g_i\Th_i$ for some $g_i\in \IF^{(d-\deg \Th_i)}$. Since $\Phi_d^{F\to
  F_a}$ maps $\prod_{i=1}^dx_{\la_i}\in \IF^{(d)}$ to
$\prod_{i=1}^dx_{\la_i}\in \calI_{a}^{(d)}$, we obtain
$$\Phi_d^{F\to F_a}( z)=\sum g_i\Th_i\in (\calI_{a}^W)^{(d)}=\ker \mathfrak{c}_{a}^{(d)}.$$
Therefore, $\Phi_d^{F\to F_a}$ induces an isomorphism $\ker \mathfrak{c}_F^{(d)}\cong \ker \mathfrak{c}_{a}^{(d)}$, and hence an isomorphism on the cokernels  $\gamma^{(d)}\hh(X)\cong \gamma^{(d)}\CH(X;R)$.
\end{proof}

\paragraph{\bf Acknowledgments}
The first author was supported by NSERC grants of D.~Daigle and the
second author.
The second author was supported by the NSERC Discovery grant
385795-2010, NSERC DAS grant 396100-2010 and the Early Researcher Award (Ontario).
The third author was supported by the Fields Institute,
I.H.E.S., SFB/Transregio 45,
University of Mainz and NSERC grant of the second author.




\bibliographystyle{plain}

\begin{thebibliography}{10}

\bibitem{ba&ne&za-ch}
Sanghoon Baek, Erhard Neher, and Kirill Zainoulline.
\newblock {\em Basic polynomial invariants, fundamental representations and the
  {C}hern class map.}
\newblock { Documenta Math.}, 17:135--150, 2012.

\bibitem{ba&za&zh}
Sanghoon Baek, Kirill Zainoulline, and Changlong Zhong.
\newblock {\em On the torsion of {C}how groups of twisted {S}pin-flags.}
\newblock {Math. Res. Letters, to appear.}

\bibitem{ca&za&pe-tor}
Baptiste Calm\`es, Victor Petrov, and Kirill Zainoulline.
\newblock {\em Invariants, torsion indices and oriented cohomology of complete
  flags.}
\newblock {Ann. Sci. Ec. Norm. Sup. (4)} 46(3): 36pp, 2013.

\bibitem{dem-inv}
Michel Demazure. 
\newblock {\em Invariants sym\'etriques entiers des groupes de Weyl et torsion.} Invent. Math. 21 (1973), 287-301.
\bibitem{dem-des}
Michel Demazure.
\newblock {\em D\'esingularisation des vari\'et\'es de {S}chubert
  g\'en\'eralis\'ees.}
\newblock { Ann. Sci. Ec. Norm. Sup. (4)}, 7(4):53--88, 1974.

\bibitem{ful-inter}
William Fulton.
\newblock {\em Intersection theory. 2nd ed.}, volume~2 of { Ergebnisse der
  Mathematik und ihrer Grenzgebiete. 3. Folge}.
\newblock Springer-Verlag, Berlin, 1998.

\bibitem{ga&ki-gamma}
Skip Garibaldi and Kirill Zainoulline.
\newblock {\em The gamma-filtration and the {R}ost invariant.}
\newblock {J. Reine Angew. Math., to appear}, 2013.

\bibitem{hir-tmag}
Friedrich Hirzebruch.
\newblock {\em Topological methods in algebraic geometry}.
\newblock Classics in Mathematics. Springer-Verlag, Berlin, reprint of the 2nd,
  corr. print. of the 3rd ed. edition, 1995.

\bibitem{hum}
James E. Humphreys.
\newblock {\em Reflection groups and coxeter groups.}
\newblock In { Cambridge Studies in Advanced Math.}, number~29. Cambridge
  Univ. Press, 1990.

\bibitem{lev&mor-book}
Marc Levine and Fabien Morel.
\newblock {\em Algebraic cobordism}.
\newblock Springer Monographs in Mathematics. Springer-Verlag, Berlin, 2007.


\bibitem{Mac} Ian G. Macdonald.
\newblock{\em Symmetric functions and Hall polynomials.}
\newblock{ Oxford Univ. Press, Oxford, 1979.}


\bibitem{pa&sm}
Ivan Panin.
\newblock {\em Oriented cohomology theories of algebraic varieties.}
\newblock { K-Theory,  30(3):265--314, 2003.}

\bibitem{Ell}
Joseph Silverman.
\newblock {\em The arithmetic of elliptic curves. 2nd ed.}
\newblock Graduate Texts in Math. 106. Springer, 2009.

\end{thebibliography}


\end{document}